\newtheorem{thm}{Theorem}[section]
\newtheorem{cor}[thm]{Corollary}
\newtheorem{lem}[thm]{Lemma}
\newtheorem{prop}[thm]{Proposition}
\theoremstyle{definition}
\newtheorem{defn}[thm]{Definition}
\theoremstyle{remark}
\newtheorem{rem}[thm]{Remark}
\def \Z{{\mathbb Z}}
\def\N{{\mathbb N}}           
\newcommand{\eps}{\varepsilon}
\newcommand{\cO}{\mathcal{O}}
\DeclareMathOperator{\Sym}{Sym}
\DeclareMathOperator{\FSym}{FSym}
\DeclareMathOperator{\supp}{supp}
\begin{document}

\author{Y. Antol\'in and J. Burillo and A. Martino}

\title{Conjugacy in Houghton's Groups}
\date{\today}
\maketitle
\begin{abstract}
Let $n\in \N$. Houghton's group $H_n$ is the group of permutations of $\{1,\dots, n\}\times \N$,
that eventually act as a translation in each copy of $\N$. We prove the solvability of
the conjugacy  problem and conjugator search problem for $H_n$, $n\geq 2$.
\medskip

{\footnotesize
\noindent \emph{2010 Mathematics Subject Classification.} Primary: 20F10, 20B99. 

\noindent \emph{Key words.} Houghton group, Permutation groups, Conjugacy problem.}

\end{abstract}

\section{Introduction}

As Bourbaki intended, we let $\N$ denote the set of finite cardinals, $\{0,1,2,\ldots \}$.

Fix a positive integer $n$. We use $X_n$ to denote the set $\{ 1, \ldots, n \} \times \N$.
Elements of $X_n$ will be usually denoted as pairs $(i, m)$, $i\in \{1, \ldots, n\}, m\in \N$.

Throughout, $\Sym=\Sym(X_n)$ will denote the group of all permutations of the set $X_n$, and
$\FSym=\FSym(X_n)$ the subgroup of permutations with finite support.

In \cite{Houghton} Houghton introduced the group $H_n$ of permutations of $X_n$ that are ``eventually translations"
in each copy of $\mathbb{N}$. More precisely, an element $g$ of $\Sym$ lies in $H_n$ if and only if there exists
a positive integer $z$ and integers $t_1(g), \ldots, t_n(g)$ such that:

\begin{equation}
\label{eq:translation}
(i,m)g = (i, m+ t_i(g) ), \, \forall m \geq z.
\end{equation}

The groups $H_n$, $n=3,4,\ldots$ are known as {\it Houghton's groups} and they  are finitely generated.
This is not at all obvious from our definition, but it is clear form the original definition of Houghton, who defined $H_n$
as the  subgroups of $\Sym$ generated by $g_2,\dots, g_{n}$ where $g_i$ acts on the first copy of $\N$ by pushing elements one step to infinity and acts
on the elements of the $i$th copy of $\N$ by pushing elements one step towards the origin. See Figure \ref{fig:fig1}.
In cycle notation we have that

\begin{equation}\label{eq:generator}
g_i=\ldots (i,2),(i,1) ,(i,0),(1,0),(1,1),(1,2),\ldots
\end{equation}

\begin{figure}[ht]
\begin{center}
\scalebox{.5}{\input{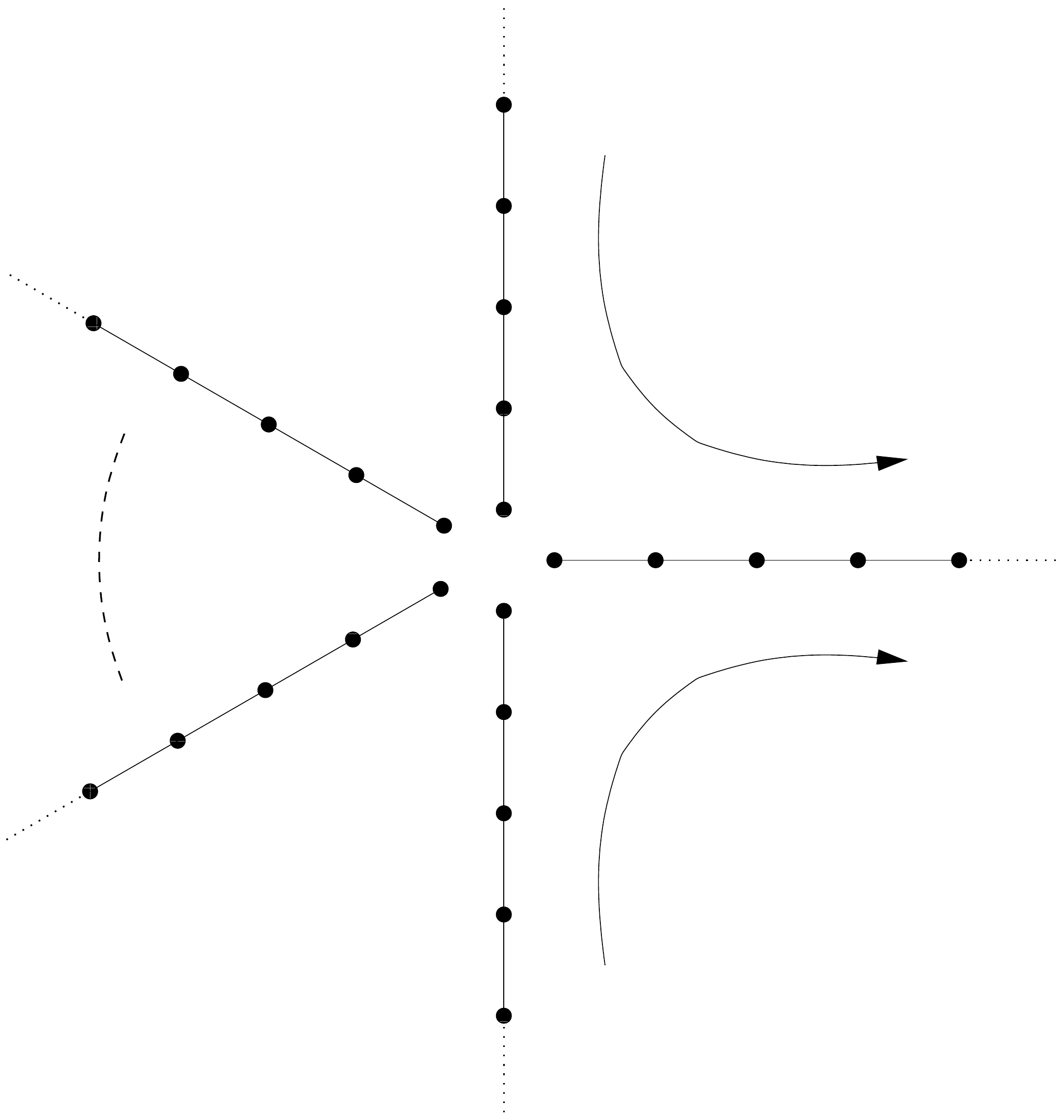_t}}
\caption{The set $X_n$ and the action of the generators $g_2$ and $g_n$.}
\label{fig:fig1}
\end{center}
\end{figure}

The integers $t_i(g)$ of \eqref{eq:translation} define a map $H_n \to \Z^n$, assigning to each element its
``eventual translation lengths''. It is clear from the description of the generators that the image of $t$ is  the subgroup
$\{ (x_1, \ldots, x_n) \in \Z^n \ : \ \sum_{i=1}^n x_i =0 \}$ of $\Z^n$.

This subgroup is isomorphic to $\Z^{n-1}$ and it was Wiegold in \cite{Wiegold} who proved that
 we have a short exact sequence

$$
1 \longrightarrow \FSym \longrightarrow H_n \longrightarrow \Z^{n-1} \longrightarrow 1.$$

\medskip

One can extend the definition given by Houghton to the cases $n=1,2$, by setting $H_1=\FSym$ and $H_2$ being described as the group
of permutations of $X_2$ that are eventually translations. However, we notice that one needs a slightly
different generating set for $H_2$. It is not hard to see that $H_2$ coincides with the subgroup of
$\Sym(X_2)$ generated by $g_2$ (as in \eqref{eq:generator}) and the transposition $((1,0),(2,0))$.

In \cite{Johnson}, Johnson found a presentation for $H_3$.
One of the main interesting properties of Houghton's groups was discovered by Brown in \cite{Brown87}.
Brown proved that, for $n\geq 1$, $H_n$ is $FP_{n-1}$ but not $FP_{n}$.
In particular $H_1$ is not finitely generated, $H_2$ is finitely generated but not finitely presented,
and for $n>2$ $H_n$ is finitely presented.

Presentations of Houghton's groups and an alternative proof of Brown's results can be found in \cite{SRLee}.
In \cite{st.john-green}, St.John-Green studied Bredon finiteness conditions for Houghton's groups.
Finally, we remark that R\"over \cite{Rover}, showed that for all $n\geq 1, r\geq 2, m\geq 1$,
$H_n$ embeds in Higman's groups $G_{r,m}$ (defined in \cite{Higman}), and in particular
all Houghton's groups are subgroups of Thompson's group $V$.

\begin{defn}
Let  $\mathcal{P}=\langle S\,|\,R \rangle$ be a group presentation, and $G$ the corresponding group.

The presentation has {\it solvable conjugacy problem} if there exists an algorithm that
given two words over $S$, the algorithm decides whether or not these two words correspond to conjugate elements
in the group.

\end{defn}
The solvability of the conjugacy problem is in fact an algebraic property of the group, i.e. it is independent of
the presentation of the group and clearly it implies the solvability of the word problem. See \cite{Miller}.

In this paper we investigate the conjugacy problem for Houghton's groups $H_n$, $n\geq 2$.

Our main theorem is the following:
\begin{thm}\label{thm:main}
Let $n\geq 2$. The conjugacy problem for Houghton's group $H_n$ is solvable. Moreover, given two conjugate elements $g,h\in H_n$,
there is an algorithm that finds $x\in H_n$ such that $g^x=x^{-1}gx=h$.
\end{thm}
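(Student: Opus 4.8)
The plan is to exploit the short exact sequence $1\to\FSym\to H_n\xrightarrow{t}\Z^{n-1}\to 1$ and reduce the conjugacy problem in $H_n$ to a combination of (i) a conjugacy-type question in the abelian quotient $\Z^{n-1}$ and (ii) a conjugacy question inside $\FSym$, but twisted by a centralizer action. First I would observe that if $g^x=h$ then applying $t$ gives $t(g)=t(h)$ (since $\Z^{n-1}$ is abelian), so we may assume the eventual translation vectors agree; otherwise $g,h$ are not conjugate. Fix a vector $v=t(g)=t(h)\in\Z^{n-1}$. The set of possible conjugators $x$ must satisfy: for every $m$ in the ``stable'' region (large $m$), the permutation $x$ carries the eventual translation structure of $g$ to that of $h$. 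This forces $x$ to itself be ``eventually a translation'' on each ray, i.e. $x\in H_n$ automatically once it conjugates one element of $H_n$ to another with a genuinely nonzero translation structure somewhere — but one must be careful about rays where the translation length is $0$, and about the degenerate case $g,h\in\FSym$.

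The key structural step is a normal form: I would show that any $g\in H_n$ can be written, after conjugating by a suitable explicit element of $H_n$, in a standard form that is an exact translation by $t_i(g)$ on each ray outside a finite common support $\{1,\dots,n\}\times\{0,\dots,z-1\}$, with an arbitrary finite permutation on the remaining finite ``core''; moreover I can compute $z$ and this data algorithmically from a word in the generators. Two elements with the same translation vector $v$ are then conjugate in $H_n$ if and only if their normal forms are conjugate; since outside the core both are literally the same partial translation, a conjugator must preserve the ``infinite-orbit'' structure and can differ only by (a) an element of the centralizer in $H_n$ of the pure-translation part, and (b) a finite permutation reconciling the two cores. The cycle structure on the infinite orbits is determined by $v$ alone (the number of bi-infinite and half-infinite orbits and how the rays are threaded together by the translation), so that part always matches; what remains is a finite combinatorial matching problem on the cores, \emph{modulo} the (finitely generated, explicitly describable) group of finitary permutations induced by $C_{H_n}(\text{translation part})$ acting on the core. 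I would identify this centralizer concretely: it is generated by shifts along orbits and by finite permutations that commute with the translation skeleton, and its action on the core decomposes the core into finitely many ``slots,'' reducing the question to whether $g$'s core permutation and $h$'s core permutation have the same cycle-type data relative to these slots — a finite check — and, when they do, reading off an explicit conjugator.

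The hardest part, I expect, will be the bookkeeping around \emph{rays with zero translation length} and the boundary between the degenerate cases $n=1,2$ (where $H_1=\FSym$ has the conjugacy problem trivially by matching cycle types with infinite multiplicities handled by the finite-support condition, and $H_2$ needs the extra generator $((1,0),(2,0))$) and the generic case: when $t(g)$ has a zero coordinate on ray $i$, the restriction of $g$ to that ray is itself a finite-support permutation on a copy of $\N$ fixed setwise, so the conjugacy invariant there is a genuine cycle-type invariant of a finitary permutation, and the conjugator is no longer forced to be eventually-translation on that ray in isolation — one has to track how zero-translation rays can be permuted among themselves and how the core permutation interacts with them. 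Managing this uniformly, and proving that the resulting finite matching problem is both necessary and sufficient (the ``sufficiency'' direction being the explicit construction of $x\in H_n$, which also answers the conjugator search problem), is where the real work lies; the abelian and infinite-orbit parts are essentially formal once the normal form is in place.
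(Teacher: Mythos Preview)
Your outline has the right skeleton --- reduce via the short exact sequence, handle the $\FSym$ conjugacy separately, control things using the centralizer --- but there are two genuine gaps where your argument would stall.

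\textbf{First gap: the infinite-orbit structure is not determined by $v$ alone.} You assert that ``the cycle structure on the infinite orbits is determined by $v$ alone (the number of bi-infinite and half-infinite orbits and how the rays are threaded together by the translation), so that part always matches.'' This is false. Take $n=4$ and $v=(1,1,-1,-1)$: one element can have its two infinite orbits threading ray~$1$ with ray~$3$ and ray~$2$ with ray~$4$, while another threads $1\leftrightarrow 4$ and $2\leftrightarrow 3$. Since any $x\in H_n$ eventually sends ray~$i$ into ray~$i$, these two threadings are \emph{not} conjugate in $H_n$. The paper encodes this as an equivalence relation $\sim_g$ on $\{1,\dots,n\}$ (rays $i,j$ related when some infinite orbit is almost equal to $X_{i,r}\cup X_{j,s}$), and this relation is a genuine conjugacy invariant once one has reduced to conjugators with $t_i(x)\equiv 0 \bmod |t_i(a)|$. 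Your normal form cannot make this disappear; it lives in how the core glues the loose ends, and you must check it.

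\textbf{Second gap: the centralizer reduction is not finite as stated.} You propose to match the two cores ``modulo the group of finitary permutations induced by $C_{H_n}(\text{translation part})$ acting on the core,'' and you correctly note that this centralizer contains the shifts along infinite orbits. But those shifts do not preserve the core --- they slide points in and out of it --- so there is no well-defined action on a fixed finite set, and there are infinitely many shift vectors to try. Nothing in your outline explains why only finitely many need be checked. This is exactly the paper's key technical step (Proposition~\ref{prop:bound}): if rays $i$ and $j$ are $\sim_a$-related and a conjugator has $t_i(x)=l_i|t_i(a)|$, $t_j(x)=l_j|t_j(a)|$, then $\big||l_i|-|l_j|\big|$ is bounded by a computable constant depending only on $a,b$. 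Combining this with explicit centralizer elements $a_i$ built from the cycles in a single $\sim_a$-class (Lemma~\ref{lem:centraliser}), one can normalize so that one representative per class has $t_{i_k}(x')=0$, hence all $|t_i(x')|$ for $i\in I$ are bounded; a separate argument (Lemma~\ref{lem:big-bound}) then handles the rays with $t_i(a)=0$. Only after this bound is established does the problem become the finite enumeration you envision, followed by the $\FSym$-conjugacy check. Your ``slots'' picture is the right endpoint, but you are missing the bounding lemma that makes it reachable.
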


As far as we know,  Houghton's groups do not belong to a family for which the conjugacy problem is already known.
For example,  Houghton's groups are not Gromov hyperbolic or biautomatic, since by Brown's result
the $K(\pi,1)$ for $H_n$ cannot have finitely many cells in dimensions greater than $n$. Also, these groups are not conjugacy separable,
since they contain the infinite (finite support) alternating group, which is an infinite simple group.

The structure of the paper is as follows. In Section \ref{sec:comp} we describe basic computations that we can do
given a word representing an element of $H_n$, in particular, we show that the word problem is solvable and that
for each $g\in H_n$, the numbers appearing in \eqref{eq:translation} are computable. In Section \ref{sec:Fsym},
we prove that we can decide if two elements of $H_n$ are conjugate by an element of $\FSym$, and in that event,
find a conjugator in $\FSym$. Finally, in Section \ref{sec:reduce}, we show how to reduce our original problem,
to the problem about conjugators in $\FSym$.

\section{Computations in $H_n$.}\label{sec:comp}
From now on, we fix $n\in \N$, $n\geq 2$. We assume that $H= H_n$ and $X= X_n$.
We also fix $S$, a finite generating set, by setting $S=\{g_2,\dots, g_n\}$ if $n\geq 3$, and
$S= \{g_2, ((1,0)(2,0) )\}$ if $n=2$. Here the generators $g_i$ are the permutations described in \eqref{eq:generator}.

The next lemma is straightforward, however we include the proof to justify that all the computations
in the paper can be actually done.
\begin{lem}\label{lem:comp}
Let $w$ be a word over $S^{\pm 1}$ and suppose that $w$ represents $g\in H$.
Let $|w|$ denote the length of $w$ over $S$.
Then one can compute $t(g)=(t_1(g),\dots, t_n(g))$, $(i,m)g$ for all $(i,m)\in X$.
Also, one has that $|t_i(g)|<|w|$ for $i=1,\dots,n$, and moreover
$$(i,m)g=(i,m+t_i(g)),\, \forall m > |w|, i\in \{1,\ldots, n\}.$$
\end{lem}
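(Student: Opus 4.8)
The plan is to argue by induction on the word length $|w|$, using the fact that each generator in $S$ has a very simple, explicit action on $X$. For the base case $|w|=0$ the word represents the identity, so $t(g)=0$, $(i,m)g=(i,m)$, and all claims are immediate (with the convention $0<|w|$ reading as $0<0$ being vacuous, or one may take $|w|=1$ as the genuine base case). For the inductive step, write $w=w's^{\eps}$ with $s\in S$ and $\eps\in\{\pm1\}$, and let $g'\in H$ be the element represented by $w'$; by induction we have computed $t(g')$, the full action of $g'$ on $X$, and we know $|t_i(g')|<|w'|$ and $(i,m)g'=(i,m+t_i(g'))$ for all $m>|w'|$.

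\medskip

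First I would record the action of each $s^{\eps}$: inspecting \eqref{eq:generator}, each $g_i$ (and its inverse), as well as the transposition $((1,0)(2,0))$ in the case $n=2$, moves any point $(j,m)$ to a point $(j',m')$ with $|m'-m|\le 1$, and moreover for $m\ge 1$ it acts as a translation by $0$ or $\pm1$ within the fixed copy $j'=j$; only finitely many points (those with small second coordinate, in fact $m=0$) behave exceptionally. Concretely, $g_i$ has $t_i=-1$, $t_1=+1$, and $t_j=0$ for $j\ne 1,i$, so $\|t(s^{\eps})\|_\infty\le 1$ for every $s\in S$. Then $t(g)=t(g')+t(s^{\eps})$ since eventual translation lengths add under composition, giving $|t_i(g)|\le|t_i(g')|+1<|w'|+1=|w|$. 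To compute $(i,m)g=((i,m)g')s^{\eps}$ one simply applies the already-known permutation $g'$ and then the explicit rule for $s^{\eps}$; this is a finite computation for each input, and since the support-type irregularities of $g'$ live among second coordinates $\le|w'|$ (by induction) and those of $s^{\eps}$ among second coordinates $\le 1$, the composite behaves as the pure translation $(i,m)\mapsto(i,m+t_i(g))$ once $m$ is large enough.

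\medskip

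The one point requiring a little care — and the step I expect to be the main (mild) obstacle — is pinning down the exact threshold, i.e. showing $(i,m)g=(i,m+t_i(g))$ for \emph{all} $m>|w|$ rather than merely for sufficiently large $m$. For this I would track the threshold quantitatively through the induction: assume $g'$ is a genuine translation by $t_i(g')$ on copy $i$ for all $m>|w'|$, and note $g'$ maps $\{i\}\times(|w'|,\infty)$ into $\{i\}\times(|w'|-|t_i(g')|,\infty)\supseteq\{i\}\times[0,\infty)\cap(\text{large})$; more simply, for $m>|w'|$ we have $m+t_i(g')\ge m-|t_i(g')|>|w'|-|w'|=0$ but we need the image to avoid the exceptional locus of $s^{\eps}$, which sits at second coordinate $0$. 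Since $m>|w'|$ forces $m\ge|w'|+1$, hence $m+t_i(g')\ge|w'|+1-|t_i(g')|\ge 1>0$ using $|t_i(g')|\le|w'|$, the point $(i,m)g'=(i,m+t_i(g'))$ has positive second coordinate and is therefore moved by $s^{\eps}$ by exactly $t_i(s^{\eps})$ within copy $i$; thus $(i,m)g=(i,m+t_i(g')+t_i(s^{\eps}))=(i,m+t_i(g))$. This holds for $m>|w'|$, hence a fortiori for $m>|w|=|w'|+1$, closing the induction. Finally, all of the above is algorithmic: one maintains a finite table describing $g$ on second coordinates up to $|w|$ together with the vector $t(g)$, updating it in one step per letter of $w$, which also yields solvability of the word problem since $g=1$ iff $t(g)=0$ and the finite table is trivial.
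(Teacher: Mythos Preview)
Your proof is correct and follows essentially the same approach as the paper: induction on $|w|$, using that each generator acts as a shift by $0$ or $\pm 1$ on each ray away from second coordinate $0$. The paper splits the inductive step into the cases $i\notin\{1,j\}$, $i=j$, $i=1$, whereas you argue uniformly by checking that $m+t_i(g')\ge 1$ so that $s^{\eps}$ acts as a pure translation; this is the same computation, just organised slightly differently, and your observation that the threshold $m>|w'|$ already suffices (not only $m>|w|$) is a harmless strengthening.
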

\begin{proof}
We know that the image of $g_i$ under $t$ is $(1,0,\dots,0,-1,0,\dots,0)$, where the $-1$ is in the $i$th coordinate.
Also the image of $((1,0)(2,0))$ under $t$ is equal to the zero vector. Thus, given any word $w$ over $S^{\pm 1}$
representing $g$ one can compute $t(g)$. Clearly $|t_i(g)|<|w|$.

Similarly, given that we know the action of the generators on the elements of $X$, we can compute
$(i,m)g$ for all $(i,m)\in X$.

We prove the last claim of the lemma only in the case when $n\geq 3$ and we leave the case $n=2$ as exercise for the reader.
We argue by induction on the length of $w$.
If $|w|=0$, then $g=1$ and the claim holds. For the general case, $w=w'g_j^{\eps}$ for some $j\in \{2,\dots, n\}$, $w'$
a word over $S$ of length $|w|-1$, $g_j\in S$, $\eps\in \{ -1,+1\}$. Let $g'$ be the element of $H$ represented by $w'$.
By induction hypothesis $(i,m)g'=(i,m+t_i(g'))$, for all $i\in \{1,\dots, n\}$ and all $m>|w'|$.

Now $(i,m)g=(i,m)g'g_j^{\eps}$ and we have two cases to consider.
If $i\not\in \{1,j\}$ then $(i,m)g=(i,m)g'g_j^\eps=(i,m+t_i(g')+t_i(g_j^{\eps}))=(i,m+t_i(g'))$,
and this holds for $m>|w'|$, and in particular for $m>|w|$.
If $i=j$, then $(j,m)g=(j,m)g'g_j^\eps=(i,m+t_i(g'))g_j^{\eps}$. If $m>|w|$, then $m>|w'|+1$ and in particular,
$m+t_i(g)>0$. Thus, $(j,m)g'g_j^{\eps}=(j,m+t_j(g')-\eps)=(j,m+t_j(g))$.

The case $i=1$ is similar.
\end{proof}

Given an element of $H$ as a word $w$ over $S^{\pm 1}$ we can compute its image under $t$ and
check if it acts trivially or not on the set $\{(i,m) \in X : m\leq |w|\}$. In particular, we can
decide whether the element is trivial or not. So we have proved the following.

\begin{cor}
The word problem for $H$ is solvable.
\end{cor}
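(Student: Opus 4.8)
The plan is to reduce the a priori infinite test ``does $w$ represent the identity?'' to a finite computation, using the data that Lemma~\ref{lem:comp} shows to be computable. Given a word $w$ over $S^{\pm 1}$ representing $g\in H$, I would first compute, via Lemma~\ref{lem:comp}, the eventual translation vector $t(g)=(t_1(g),\dots,t_n(g))$ and the restriction of $g$ to the finite set $F_w:=\{(i,m)\in X : m\leq |w|\}$.

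Next I would prove the key equivalence: $g=1$ in $H$ if and only if $t(g)=0$ and $g$ fixes every point of $F_w$. The forward direction is immediate. For the converse, suppose $t(g)=0$; then the last assertion of Lemma~\ref{lem:comp} gives $(i,m)g=(i,m+t_i(g))=(i,m)$ for all $i\in\{1,\dots,n\}$ and all $m>|w|$, so $g$ already acts as the identity on $X\setminus F_w$. If in addition $g$ fixes each element of $F_w$, then $g$ fixes all of $X$, whence $g=1$.

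Finally I would observe that both conditions are decidable: checking $t(g)=0$ is a comparison of integer vectors, and checking that $g$ fixes $F_w$ pointwise involves only the $n(|w|+1)$ points of $F_w$, whose images under $g$ are computable by the same lemma. This gives an algorithm that decides whether an arbitrary word over $S^{\pm1}$ represents the trivial element, i.e. solves the word problem for $H$. I do not expect any genuine obstacle here; the one point worth stressing is that Lemma~\ref{lem:comp} is precisely what licenses replacing the infinite condition ``$g$ fixes every element of $X$'' by the finite one ``$t(g)=0$ and $g$ fixes $F_w$ pointwise''.
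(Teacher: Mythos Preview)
Your proposal is correct and matches the paper's argument essentially verbatim: the paper also deduces the corollary directly from Lemma~\ref{lem:comp} by computing $t(g)$ and checking whether $g$ acts trivially on $\{(i,m)\in X : m\leq |w|\}$. Your write-up is in fact a bit more explicit than the paper's in spelling out the equivalence, but the idea is identical.
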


\begin{rem}
We note that the solvability of the word problem for $H$ was already known, and can be proved using a variety of techniques. It may be of interest to observe that the approach we outline here gives a quadratic time algorithm for the word problem.
\end{rem}

As a consequence of Lemma \ref{lem:comp}, it is worth noting that a number  $z$ satisfying
 equation \eqref{eq:translation} is computable, for instance, it is enough to take  $z=|w|$, where $w$ is a word representing $g$.
We note that this number is not uniquely defined from equation \eqref{eq:translation}. However, for our purposes it
will be sufficient to find any number for which equation \eqref{eq:translation} is valid.

The support of an element of $g\in H$ is the set $\{x\in X:  xg\neq x\}$, and it is denoted by $\supp(g)$.
Notice that, in general, elements in $H$ do not have finite support. However, we will show that they do have finitely many
orbits of length at least 2 (there are finitely many non-trivial finite orbits by definition, and Corollary \ref{cor:fmorbits} shows that there are
finitely many infinite orbits).

From the computability of the $t_i(g)$ and $z$ of  equation \eqref{eq:translation} it is straightforward
to determine the orbits of $g$.
Infinite orbits of $g$ will consist of pairs of the sets codified by arithmetic progressions
except for finitely many points (where the cutoff is determined by the $z$ from equation \eqref{eq:translation}).

We now show that there are finitely many infinite orbits, and hence $g$ is a product of finitely many disjoint cycles.

\begin{defn}
\label{def:xmod}
Given an element $g \in H$, for which $t_i(g)\neq 0$, we denote by $X_{i,r}(g)$ the following subset of $X$:
$$\{ (i,m) \ : \ m \equiv r \mod |t_i(g)| \}.$$
Note that these sets depend on $g$, or more precisely, the numbers $t_i(g)$.
\end{defn}
When the element $g\in H$ is clear from the context, we shall only write $X_{i,r}$ for $X_{i,r}(g)$.

Also note that for a given $g$ there are finitely many such sets,
as we can (and shall) always assume that when writing $X_{i,r}$ we have $0 \leq r < |t_i(g)|$.

One can then essentially read off the infinite orbits of $g$ using the sets $X_{i,r}$,
and therefore the infinite cycles in the cycle decomposition of $g$.
We say ``essentially'' as generally an orbit which meets some $X_{i,r}$ will contain all but finitely many points of it.

\begin{defn}
We shall say that two sets, $A,B$, are almost equal if their symmetric difference is finite.
\end{defn}

The following is then clear.

\begin{prop}
Let $g \in H$ and $(i,m) \in X$.
If the set $\{ (i,m) g^k \ : \ k \geq 0 \}$ is infinite, then it is almost equal to $X_{i,r}$ for some $i,r$ and $t_i(g)$
must be positive. Conversely, if $t_i(g)$ is positive, then $X_{i,r}$  is almost equal to
$\{ (i,m) g^k \ : \ k \geq 0 \}$ for some $m\in \N$.

Similarly, if $\{ (j,m) g^k \ : \ k \leq 0 \}$ is infinite, then it is almost equal to some $X_{j,s}$
and $t_j(g)$ is negative.
\end{prop}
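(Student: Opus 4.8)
The plan is to unwind the definitions and use Lemma \ref{lem:comp} to control the eventual behaviour of $g$ on each copy of $\N$. Fix $g \in H$ and a number $z$ satisfying \eqref{eq:translation}, say $z = |w|$ for a word $w$ representing $g$, so that $(i,m)g = (i, m+t_i(g))$ for all $m \geq z$ and all $i$. For the first assertion, suppose $O = \{(i,m)g^k : k \geq 0\}$ is infinite. Since $g$ permutes $X$ and $O$ is contained in the single copy $\{i\} \times \N$ for $k$ large (I will first argue that $O$ can meet only finitely many copies, because each application of $g$ changes the first coordinate only within the finite set $\{(j,\ell) : \ell < z\}$, so once the orbit leaves that finite set it stays in one copy), infinitude of $O$ forces the second coordinates to be unbounded. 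Picking $(i,m') \in O$ with $m' \geq z$, repeated application of \eqref{eq:translation} gives $(i,m')g^k = (i, m' + k\,t_i(g))$ for all $k \geq 0$, so $t_i(g) > 0$ (if it were negative or zero the forward orbit would be bounded, contradicting infinitude). Setting $r \equiv m' \bmod |t_i(g)|$, every point $(i, m'+k|t_i(g)|)$ lies in $O$, and these are cofinitely many points of $X_{i,r}$; conversely $O$ can contain at most the finitely many points of $\{i\}\times\N$ with second coordinate $< z$ outside $X_{i,r}$, together with possibly a few points in other copies, all finite in number. Hence $O$ is almost equal to $X_{i,r}$.

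For the converse, suppose $t_i(g) > 0$. Consider the point $(i, z)$. By \eqref{eq:translation}, $(i,z)g^k = (i, z + k\,t_i(g))$ for all $k \geq 0$, so the forward orbit $O$ of $(i,z)$ is infinite and is almost equal to $X_{i,r}$ where $r \equiv z \bmod |t_i(g)|$ — indeed $O$ contains $(i, z + k t_i(g))$ for all $k\geq 0$, which is all but finitely many elements of $X_{i,r}$ (we lose only those with second coordinate $< z$, a finite set). This is the required orbit with $m = z$.

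The third assertion is the same argument applied to $g^{-1}$: one has $(j,m)g^{-1} = (j, m - t_j(g))$ for $m \geq z'$ (where $z'$ is a bound for $g^{-1}$, again computable), so if the backward orbit $\{(j,m)g^k : k \leq 0\}$ is infinite then applying the first part to $g^{-1}$ shows $t_j(g^{-1}) = -t_j(g) > 0$, i.e. $t_j(g) < 0$, and the orbit is almost equal to $X_{j,s}$ for the appropriate residue $s$.

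The one slightly delicate point, which I would state carefully at the start, is the claim that an infinite orbit eventually lives in a single copy $\{i\}\times\N$ — equivalently, that $g$ (and $g^{-1}$) move the first coordinate only inside the finite region $\{(j,\ell) : \ell < z\}$. This follows directly from \eqref{eq:translation}: for $m \geq z$, $(i,m)g = (i, m+t_i(g))$ keeps the first coordinate equal to $i$. So the set of points whose first coordinate can ever change under $g^{\pm 1}$ is contained in the finite set of points with second coordinate $< z$ (for $g$) or $< z'$ (for $g^{-1}$); once an orbit exits this finite set it is trapped in one copy of $\N$, and an infinite orbit certainly exits it. Everything else is bookkeeping with arithmetic progressions modulo $|t_i(g)|$, exactly as in Definition \ref{def:xmod}.
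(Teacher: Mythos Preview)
Your argument is essentially correct and in fact more detailed than the paper's, which simply declares the proposition ``clear'' and offers no proof.

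One point to tighten: the claim ``once the orbit leaves the finite set $\{(j,\ell):\ell<z\}$ it stays in one copy'' is not literally true. If the orbit exits into a ray $i$ with $t_i(g)<0$, successive applications of $g$ decrease the second coordinate and the orbit will re-enter the finite region. The correct statement is that, since an infinite forward orbit consists of distinct points and the finite region has only finitely many points, there is some $K$ beyond which the orbit is \emph{permanently} outside that region; from step $K$ onward the first coordinate is constant (say $j$) and $m_{k+1}=m_k+t_j(g)$ with $m_k\geq z$ for all $k\geq K$, which forces $t_j(g)>0$. With that adjustment your bookkeeping with arithmetic progressions goes through exactly as written. (A related cosmetic point: the index $j$ of the ray in which the orbit is eventually trapped need not be the $i$ of the starting point $(i,m)$; the proposition's reuse of the letter $i$ is sloppy, and your write-up inherits that ambiguity.)

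For the converse, you exhibit a half-orbit matching $X_{i,r}$ only for the single residue $r\equiv z\pmod{t_i(g)}$; to handle an arbitrary $r$, start instead at any $(i,m)$ with $m\geq z$ and $m\equiv r\pmod{t_i(g)}$, and the same computation applies.
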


\begin{cor}\label{cor:orbits}
Every infinite orbit of $g \in H$ is almost equal to $X_{i,r} \cup X_{j,s}$, for some $i,j,r,s$ with $i \neq j$.
\end{cor}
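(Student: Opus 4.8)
The plan is to bootstrap the preceding Proposition, which already analyses \emph{one-sided} orbits, up to the two-sided orbits. First I would fix an infinite orbit $\cO$ of $g$ and choose a point $x=(i,m)\in\cO$, so that $\cO=\{xg^k : k\in\Z\}$. The first thing to observe is that $\cO$ being infinite forces \emph{both} the forward orbit $\cO^{+}:=\{xg^k : k\ge 0\}$ and the backward orbit $\cO^{-}:=\{xg^k : k\le 0\}$ to be infinite: if, say, $\cO^{+}$ were finite, then $xg^{k}=xg^{l}$ for some $0\le k<l$, hence $xg^{l-k}=x$, so $x$ is periodic under $g$ and $\cO$ would be finite, a contradiction; the argument for $\cO^{-}$ is identical (apply it to $g^{-1}$).

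Next I would apply the Proposition twice. Applied to $\cO^{+}$, which is infinite, it produces indices $i',r$ with $t_{i'}(g)>0$ such that $\cO^{+}$ is almost equal to $X_{i',r}$. Applied to $\cO^{-}$ (the ``Similarly'' clause), it produces indices $j,s$ with $t_j(g)<0$ such that $\cO^{-}$ is almost equal to $X_{j,s}$. Since $\cO=\cO^{+}\cup\cO^{-}$ and $(A\cup B)\triangle(A'\cup B')\subseteq (A\triangle A')\cup(B\triangle B')$, almost-equality is preserved under unions, so $\cO$ is almost equal to $X_{i',r}\cup X_{j,s}$. Finally, $t_{i'}(g)>0>t_j(g)$ gives $i'\ne j$, which is exactly the assertion (after renaming $i'$ to $i$).

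There is essentially no hard step here. The only points requiring a little care are the initial observation that an infinite orbit is automatically infinite ``on both sides'' (needed because the Proposition only addresses one-sided orbits), and the mild bookkeeping that the eventual copy of $\N$ visited by $\cO^{+}$, namely the index $i'$, need not be the copy containing the chosen starting point $x$; but this is harmless, since the statement only asserts the existence of suitable indices $i,j,r,s$.
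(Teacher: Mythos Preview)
Your argument is correct and is precisely the intended derivation: the paper states this as an immediate corollary of the preceding Proposition without giving a proof, and your two-step use of that Proposition (forward orbit gives an $X_{i,r}$ with $t_i(g)>0$, backward orbit gives an $X_{j,s}$ with $t_j(g)<0$, hence $i\neq j$) together with the observation that an infinite orbit must be infinite on both sides is exactly what the authors have in mind. Your care about the starting index possibly differing from the eventual index $i'$ is also appropriate given the Proposition's slightly loose notation.
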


\begin{cor}\label{cor:fmorbits}
Given $g \in H_n$ there are exactly $\frac{1}{2}(\sum |t_i(g)|)$ distinct infinite orbits,
and hence the same number of infinite cycles in the cycle decomposition of $g$.
\end{cor}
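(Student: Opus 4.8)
The plan is to count infinite orbits by matching them up with the index pairs $(i,j)$ and residues $(r,s)$ supplied by Corollary~\ref{cor:orbits}, while being careful that different index--residue data genuinely give different orbits. First I would recall from the Proposition and Corollary~\ref{cor:orbits} that every infinite orbit $\cO$ of $g$ is almost equal to a set of the form $X_{i,r}\cup X_{j,s}$ with $i\neq j$, where necessarily $t_i(g)>0$ and $t_j(g)<0$. Conversely, for each $i$ with $t_i(g)>0$ the Proposition tells us that each of the $|t_i(g)|$ sets $X_{i,r}$, $0\le r<|t_i(g)|$, is almost equal to a forward orbit $\{(i,m)g^k:k\ge0\}$, hence is contained (up to finitely many points) in a single infinite orbit of $g$; and distinct residues $r$ give sets $X_{i,r}$ that are disjoint and infinite, so they lie in distinct orbits (two almost-equal sets cannot be almost contained in the same orbit as two disjoint infinite sets). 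Thus the infinite orbits are in bijection with the set of pairs $(i,r)$ with $t_i(g)>0$ and $0\le r<|t_i(g)|$, and the number of such pairs is $\sum_{i:\,t_i(g)>0}|t_i(g)|$.

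Next I would observe that, because $t$ lands in the zero-sum subgroup of $\Z^n$, we have $\sum_{i:\,t_i(g)>0}|t_i(g)| = \sum_{j:\,t_j(g)<0}|t_j(g)| = \tfrac12\sum_{i=1}^n|t_i(g)|$. Combining this with the bijection from the previous paragraph yields exactly $\tfrac12\bigl(\sum_i|t_i(g)|\bigr)$ infinite orbits. Since every infinite orbit of a permutation is a single infinite cycle in its cycle decomposition, and conversely, this is also the number of infinite cycles, giving the statement.

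The step I expect to require the most care is verifying that the correspondence ``orbit $\mapsto$ the collection of $X_{i,r}$ it almost contains'' is a bijection rather than merely a surjection onto index--residue data: one must rule out, on the one hand, an infinite orbit that is almost equal to $X_{i,r}\cup X_{j,s}$ but also almost contains some third set $X_{k,u}$ (impossible since $X_{i,r}\cup X_{j,s}$ meets $X_{k,u}$ in at most finitely many points when $(k,u)\notin\{(i,r),(j,s)\}$), and on the other hand two distinct orbits almost containing the same $X_{i,r}$ (impossible since distinct orbits are disjoint while $X_{i,r}$ is infinite). Once this pigeonhole-style bookkeeping is in place the count is immediate, so I would present it crisply: each positive-translation index $i$ contributes exactly $|t_i(g)|$ forward halves of infinite orbits, each negative-translation index $j$ contributes exactly $|t_j(g)|$ backward halves, these halves pair up into full orbits, and the zero-sum relation forces the total to be half of $\sum_i|t_i(g)|$.
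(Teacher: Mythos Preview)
Your argument is correct and is precisely the natural expansion of what the paper leaves implicit: the paper states this corollary without proof, treating it as an immediate consequence of the preceding Proposition and Corollary~\ref{cor:orbits}. Your bookkeeping---biject infinite orbits with pairs $(i,r)$ having $t_i(g)>0$, then invoke the zero-sum relation $\sum t_i(g)=0$ to rewrite the count as $\tfrac12\sum|t_i(g)|$---is exactly the intended reading, and the injectivity check you flag (distinct $X_{i,r}$ cannot be almost contained in the same orbit) is the only point requiring any thought.
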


Given the results above, one can determine the infinite orbits of $g$ and
hence the infinite cycles, and from there the entire cycle decomposition of $g$.

\begin{rem}
Note that in writing an element of $H$ as a product of disjoint cycles,
the cycles which appear may not themselves be elements of $H$.

Given such a decomposition as a product of disjoint cycles, it is then straightforward to decide if
two elements of $H$ are conjugate in $\Sym$; they are conjugate if and only if they have the same cycle type.
However, this will not be the same as conjugacy in $H$ in general.
\end{rem}

\section{Conjugacy by elements of $\FSym$} \label{sec:Fsym}
In this section we will prove the following
\begin{prop}\label{prop:fsym}
There is an algorithm that, given $a,b\in H$, decides if there exists $x\in \FSym\leqslant H$ such that  $b=xax^{-1}$
and produces such an element in the case it exists.
\end{prop}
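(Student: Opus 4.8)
The plan is to reduce the question to a finite computation by exploiting the fact that an $\FSym$-conjugator cannot change the ``structure at infinity'' of an element, so $a$ and $b$ must already agree there. First I would compute, using Lemma \ref{lem:comp} and Corollary \ref{cor:fmorbits}, the eventual translation vectors $t(a)$ and $t(b)$ together with bounds $z_a,z_b$ beyond which $a,b$ act as honest translations, and the full (finite) list of cycles — finite and infinite — in the cycle decompositions of $a$ and $b$. Conjugation by $x\in\FSym$ fixes the homomorphism $t$ (since $\FSym\le\ker t$), so if $t(a)\neq t(b)$ we answer ``no''. More importantly, conjugation by $x$ only alters $a$ on the finite set $\supp(x)\cup\supp(x)a$; outside a large enough finite region, $b=xax^{-1}$ must act \emph{identically} to $a$, not merely with the same translation lengths. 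Using the sets $X_{i,r}$ from Definition \ref{def:xmod} and Corollary \ref{cor:orbits} one can make this precise: the infinite cycles of $b$ must be obtained from those of $a$ by a permutation that is ``eventually the identity'' on each arithmetic progression $X_{i,r}$.

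The key structural step is to pin down a finite ``active region'' $Y\subseteq X$, computable from $a$ and $b$ (e.g. take $Y=\{(i,m): m\le N\}$ for $N$ large compared to $z_a,z_b$, all the $|t_i|$, and the supports of the finite cycles of $a$ and $b$), with the property: if any $\FSym$-conjugator exists, then one exists with support contained in $Y$. The justification is that if $x\in\FSym$ conjugates $a$ to $b$, then on $X\setminus Y$ (chosen so that $a$ and $b$ both act as the same pure translations there and $Y$ is $a$- and $b$-invariant in the relevant sense), $x$ must commute with these translations; combined with $x$ having finite support, one shows $x$ can be modified outside $Y$ to the identity without destroying the conjugacy — essentially because two finite-support permutations that conjugate $a$ to $b$ differ by an element of the centralizer of $a$, and one can choose the representative supported in $Y$. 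I would phrase this as: $xax^{-1}=b$ with $x\in\FSym$ iff $x$ restricts to a bijection $\supp(a)\triangle\supp(b)$-region that intertwines the cycle structures of $a$ and $b$ within $Y$ and is the identity just outside.

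Once the search is confined to $\FSym(Y)$, which is a finite group, the algorithm is immediate: enumerate the finitely many $x\in\FSym(Y)$, and for each check whether $xax^{-1}=b$ by comparing the two permutations on the finite set $Y$ together with finitely many ``boundary'' points (using Lemma \ref{lem:comp} to evaluate everything). If some $x$ works, output it; otherwise report that no $\FSym$-conjugator exists. The main obstacle — and the part requiring genuine care rather than bookkeeping — is establishing the bound $N$ (equivalently, the region $Y$) and proving the reduction claim that existence of \emph{some} $\FSym$-conjugator implies existence of one supported in $Y$. This hinges on understanding the centralizer of $a$ in $\FSym$ and on the observation that an infinite cycle of $a$ and the corresponding cycle of $b$, being almost equal to the same $X_{i,r}\cup X_{j,s}$ by Corollary \ref{cor:orbits}, differ only by a finite permutation that can be realized inside $Y$; the finite cycles are handled by the classical fact that finite permutations are conjugate in $\FSym$ iff they have the same cycle type, with the conjugator supported on the union of the relevant supports.
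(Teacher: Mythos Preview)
Your outline is correct and arrives at the same endpoint as the paper, but the route and the crucial technical step differ. The paper does not search over a finite symmetric group $\FSym(Y)$; instead it shows directly that \emph{every} $\FSym$-conjugator $x$ is already the identity on a computable cofinite subset of $\supp(a)\cap\supp(b)$. Concretely, on a ray $i$ with $t_i(a)\neq 0$ one takes the minimal $z'$ with $(i,m)x=(i,m)$ for all $m\ge z'$ and derives from the conjugacy equation applied at the points $(i,z'+m)$ that $z'\le z-t_i(a)$, where $z$ depends only on $a,b$. This pins down $x$ on $\supp(a)$ up to finitely many choices; the remaining issue is whether the resulting partial bijection $x_0\colon\supp(a)\to\supp(b)$ extends to an element of $\FSym$ rather than merely of $\Sym$, and here the paper invokes Lemma~\ref{lem:supp} (equality of $|\supp(a)\setminus I|$ and $|\supp(b)\setminus I|$ for $I=\supp(a)\cap\supp(b)$) as exactly the obstruction, extending $x_0$ explicitly when it vanishes.

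Your version instead tries to modify $x$ by centralizer elements so that $\supp(x)\subseteq Y$. That is the right move on rays with $t_i(a)=0$ and on $X\setminus(\supp(a)\cup\supp(b))$, where $a$ is eventually the identity and any finitary permutation far out centralizes it. But on rays with $t_i(a)\neq 0$ the centralizer of $a$ in $\FSym$ restricted to an infinite cycle is trivial, so there is nothing to modify: you must show that $x$ is \emph{already} the identity beyond a threshold depending only on $a$ and $b$, which is precisely the $z'$-argument above and is what your phrase ``must commute with these translations, combined with finite support'' gestures toward without actually carrying out (finite support alone gives no bound independent of $x$). Once that bound is supplied your brute force over $\FSym(Y)$ is a legitimate, if less constructive, alternative to the paper's extension step; your search also implicitly absorbs the role of Lemma~\ref{lem:supp}, which in the paper is isolated as a separate checkable necessary condition.
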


Before proving the proposition, we need a lemma.
\begin{lem}\label{lem:supp}
Let $a,b\in H$. If there is $x\in \FSym\leqslant H$ such that $b=xax^{-1}$ then
\begin{equation}
\label{eq:supp}
|\supp(a)-(\supp(a)\cap \supp(b))|=|\supp(b)-(\supp(a)\cap \supp(b))|.
\end{equation}
\end{lem}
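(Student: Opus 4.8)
The plan is to exploit the fact that conjugation by $x \in \FSym$ changes an element only on a finite set, and to compare how $a$ and $b$ fail to agree. First I would set $F = \supp(x)$, a finite set. Outside of $F \cup Fa^{-1}$ (equivalently, outside a suitable finite enlargement $F'$ of $F$ closed under the relevant maps), $x$ acts trivially and so $b = xax^{-1}$ agrees with $a$ pointwise; in particular $\supp(a) \setminus F' = \supp(b) \setminus F'$. Thus the sets $\supp(a) \setminus (\supp(a)\cap\supp(b))$ and $\supp(b)\setminus(\supp(a)\cap\supp(b))$ — which measure the points where exactly one of $a, b$ is non-trivial — are both contained in $F'$ and are therefore finite, so the equation \eqref{eq:supp} is at least a comparison of two finite cardinalities.

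To get the actual equality, I would count fixed points inside $F'$. Since $b = xax^{-1}$ and $x$ permutes $F'$ (after enlarging $F'$ to be $x$-invariant — possible since $\supp(x)$ is finite), the map $y \mapsto yx^{-1}$ gives a bijection from the set of points of $F'$ moved by $a$ to the set of points of $F'$ moved by $b$: indeed $(yx^{-1})b = yx^{-1}\cdot xax^{-1} = (ya)x^{-1}$, so $yx^{-1}$ is fixed by $b$ iff $(ya)x^{-1} = yx^{-1}$ iff $ya = y$. Hence $|\supp(a)\cap F'| = |\supp(b)\cap F'|$. Now write $\supp(a)\cap F' = (\supp(a)\cap\supp(b)\cap F') \sqcup (\,(\supp(a)\setminus\supp(b))\cap F'\,)$ and similarly for $b$; since $\supp(a)\setminus\supp(b)$ and $\supp(b)\setminus\supp(a)$ are both contained in $F'$ (by the first paragraph), subtracting the common term $|\supp(a)\cap\supp(b)\cap F'|$ from both sides yields exactly \eqref{eq:supp}.

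I expect the only real care needed is the bookkeeping in choosing $F'$: it must be finite, contain $\supp(x)$, be invariant under $x$, and be large enough that $a$ and $b$ agree off $F'$ and that $\supp(a)\cap\supp(b)\subseteq\supp(a)\cup\supp(b)$ has its "asymmetric" parts captured inside $F'$. All of these are satisfied by taking, say, $F'$ to be the union of $\supp(x)$ with its finitely many images and preimages under $a$ and $b$; since $\supp(x)$ is finite this is a finite set, and the argument above goes through verbatim. No computability is claimed in the lemma, so I would not worry about effectiveness here — that is deferred to the proof of Proposition \ref{prop:fsym}.
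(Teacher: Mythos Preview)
Your argument is correct and rests on the same core observation as the paper's: conjugation by $x$ carries $\supp(a)$ bijectively onto $\supp(b)$, and since $x$ has finite support one can compare cardinalities after isolating a finite ``discrepancy'' set. The paper organizes the bookkeeping a bit more economically: rather than building an auxiliary finite $x$-invariant set $F'$, it works directly with $\supp(a)\cup\supp(b)\cup\supp(x)$, which already contains $\supp(x)$ and is therefore automatically $x$-invariant; decomposing this as $I\sqcup C_a\sqcup C_b\sqcup C_x$ (with $I=\supp(a)\cap\supp(b)$, $C_a=\supp(a)\setminus I$, $C_b=\supp(b)\setminus I$, $C_x=\supp(x)\setminus(\supp(a)\cup\supp(b))$) and taking complements of the bijection $I\sqcup C_a\to I\sqcup C_b$ immediately yields $|C_a|=|C_b|$. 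Your route via counting inside a finite $F'$ achieves the same thing with slightly more setup.
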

\begin{proof}
Notice that $x$ restricts to a bijection on any subset of $X$ containing $\supp(x)$, which is finite.
Also, $x$ restricts to a bijection from $\supp(a)$ to $\supp(b)$.
Thus $x$ restricts to a bijection on the set $\supp(a) \cup \supp(b) \cup \supp(x)$ which we express as
the disjoint union of four sets: $I, C_a, C_b, C_x$, where $I=\supp(a)\cap \supp(b)$,
$C_a=\supp(a)-I$, $C_b=\supp(b)-I$, $C_x=\supp(x) - (\supp(a) \cup \supp(b))$.

Notice that as $a$ and $b$ are conjugate by $x\in \FSym$, $\supp(a)$ and $\supp(b)$ are almost equal
and $\supp(x)$ is finite. Thus  $C_a$, $C_b$ and $C_x$ are finite sets.
Moreover, $x$ maps $\supp(a)$ to $\supp(b)$ and so maps
$C_a \sqcup I$ to $C_b \sqcup I$.
Since $x$ restricts to a bijection of $C_a\sqcup C_b\sqcup C_x \sqcup I$, then $x$ must map $C_a \sqcup C_x$ to $C_b \sqcup C_x$.
Thus $C_a$ and $C_b$ have the same number of elements, as required.
\end{proof}

Now we are ready to prove the proposition.

\begin{proof}[Proof of Proposition \ref{prop:fsym}]
First we notice that necessary conditions for $a$ and $b$ to be conjugate are that
$t(a)= t(b)$, and by Lemma \ref{lem:supp} that \eqref{eq:supp} holds. We notice that
both conditions are decidable. We assume that both conditions hold.

Suppose for some $i\in \{1,\dots, n\}$, that $t_i(a)=t_i(b) < 0$.
Then there exists a $z$ such that $(i,m)a=(i, m+ t_i(a))$ and
$(i,m)b=(i, m+ t_i(b))$ for all $m \geq z$ (this $z$ is computable and depends only on $a$ and $b$).
Let $x\in\FSym$ be a potential conjugator. There exists a $z'$ such that
\begin{equation}
\label{eq:zmin}
(i,m)x=(i,m) \text{ for all }m \geq z'.
\end{equation}
Let us suppose this $z'$ is minimal. We claim that $z' \leq z - t_i(a)$ (recall that $t_i(a)$ is negative).

To do this we argue by contradiction and suppose that $z' > z-t_i(a)$, and in particular, $z'>z$.
Then for any $m\geq 0$ we have that,
$$
(i,z'+m) x^{-1} a  x = (i, z'+m)b = (i, z'+m+ t_i(b)) = (i, z'+m+t_i(a))
$$
since $z'+m$ is larger than both $z$ and $z'$. On the other hand,
$$
(i,z'+m) x^{-1} a  x = (i, z'+m) a x = (i, z'+m+t_i(a)) x,
$$
and from here we deduce that, for all $m\geq 0$,
$$
(i, z'+m+t_i(a)) x = (i, z'+m+t_i(a))
$$
which contradict the minimality of $z'$ in \eqref{eq:zmin} (recall that $t_i(a)$ is negative).
A similar argument (replacing $a,b$ by their inverses) holds when $t_i(a) > 0$.
Note that any potential conjugator must send $\supp(a)$ bijectively to $\supp(b)$,
and the argument above shows that outside of a finite set whose size only depends on $a$ and $b$ (can be computed from $t$ and $z$),
a potential conjugator is the identity. In fact, a potential conjugator $x$ will be the identity
on a computable set contained in the intersection of $\supp(a)$ and $\supp(b)$ which is
almost equal to both $\supp(a)$ and $\supp(b)$. Therefore, we can decide if there exists a bijection,
$x_0$, from $\supp(a)$ to $\supp(b)$ which satisfies $x_0^{-1} a {x_0} = b$ for all elements of $\supp(b)$.
All that remains is to decide whether such an $x_0$ may be extended to a bijection on the whole of $X$.
So let us suppose that such an $x_0$ exists.

Note that {\em any} extension of $x_0$ to a bijection on $X$ will be a valid conjugator in $\Sym$,
though not necessarily in $H$. Recall that we are assuming that \eqref{eq:supp} holds, and hence, there is a bijection between the finite sets $\supp(a)-I$ and $\supp(b)-I$, where $I=\supp(a)\cap \supp(b)$. In this case,
it is clear that we can extend $x_0$ to a bijection on $\supp(a) \cup \supp(b)=\supp(a)\sqcup (\supp(b)-I)=\supp(b)\sqcup (\supp(a)-I)$, and then we can extend
it by the identity on the rest of $X$. Such a bijection is clearly an element of $\FSym$.
\end{proof}

\section{Reducing the problem to finding a conjugator in $\FSym$}\label{sec:reduce}

We now turn to the conjugacy problem for $H$. To this we consider two elements, $a,b \in H$
and try to decide whether there exists a conjugator, $x \in H$ such that $x^{-1} a x = a^x  = b$.

Note that a necessary (but not sufficient) condition for conjugacy is that $t(a)=t(b)$.
In this situation, the sets $X_{i,r}(a)$ and $X_{i,r}(b)$ of Definition \ref{def:xmod}, when defined, are the same, as these sets
only depend on $t(a)$ and $t(b)$ respectively. We shall henceforth always mean these particular sets, based on $a$ or $b$.
Throughout this section we will use $I$ to denote the set $\{i\in \{1,\dots,n\}\mid t_i(a)\neq 0\}$, that is the subset of branches
that are not almost fixed by $a$ (and by $b$).

Our first step is to reduce the problem to the case where $(X_{i,r})x$ is almost equal to $X_{i,r}$.
This is equivalent to requiring that $t_i(x) \equiv 0 \mod |t_i(a)|$ for all $i\in I$.

The following lemma is a useful observation.
\begin{lem}\label{lem:reduce}
Let $C$ be a subset of $G$. Suppose that $C=C_0\sqcup C_1\sqcup \cdots \sqcup C_k$ and that there exist
$z_1,\dots, z_k\in G$ such that $C_iz_i=C_0$.

Suppose that there is an algorithm that decides if given two elements $g,h$ of $G$ they are conjugate by an element of $C_0$ and finds this conjugator.
Then there is an algorithm that decides if given two elements $g,h$ of $G$ they are conjugate by an element of $C$ and finds this conjugator.
\end{lem}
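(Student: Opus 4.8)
The statement is a purely formal "coset decomposition" observation, so the plan is to unwind the definitions carefully. Suppose $C = C_0 \sqcup C_1 \sqcup \cdots \sqcup C_k$ and fix $z_1, \dots, z_k \in G$ with $C_i z_i = C_0$; write $z_0 = 1$ so that $C_i z_i = C_0$ for all $i = 0, 1, \dots, k$. The key algebraic observation is the following: an element $g$ is conjugate to $h$ by some $c \in C$ if and only if there is an index $i \in \{0, 1, \dots, k\}$ and an element $c_0 \in C_0$ such that $c = c_0 z_i^{-1}$, i.e. $g^{c_0 z_i^{-1}} = h$, equivalently $g^{c_0} = h^{z_i}$. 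Indeed, any $c \in C$ lies in a unique $C_i$, and then $c z_i \in C_0$; conversely $c_0 z_i^{-1}$ ranges over all of $C_i$ as $c_0$ ranges over $C_0$.

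Given this, the algorithm is immediate. On input $g, h \in G$, loop over $i = 0, 1, \dots, k$. For each $i$, compute $h^{z_i} = z_i^{-1} h z_i$ (the $z_i$ are fixed, so this is a single group multiplication, which we assume is effective — in our application $G = H_n$ and we have such computations from Section \ref{sec:comp}), and run the assumed algorithm to decide whether $g$ and $h^{z_i}$ are conjugate by an element of $C_0$. If for some $i$ it returns a conjugator $c_0 \in C_0$ with $g^{c_0} = h^{z_i}$, then $g^{c_0 z_i^{-1}} = (h^{z_i})^{z_i^{-1}} = h$, and since $c_0 z_i^{-1} \in C_0 z_i^{-1} = C_i \subseteq C$ we may output $x = c_0 z_i^{-1}$. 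If the loop finishes without success, then by the equivalence above no element of $C$ conjugates $g$ to $h$, and we report failure. Since there are only $k+1$ indices and each step invokes a terminating subroutine, the procedure terminates.

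There is essentially no obstacle here; the only point requiring a little care is bookkeeping the direction of the conjugation action (the paper writes $g^x = x^{-1} g x$) so that the reduction produces $h^{z_i}$ rather than, say, $z_i h z_i^{-1}$, and correspondingly returns $c_0 z_i^{-1}$ rather than $z_i^{-1} c_0$. One should double-check, using $C_i z_i = C_0 \iff C_i = C_0 z_i^{-1}$, that the conjugator returned genuinely lies in the correct piece $C_i$ and hence in $C$. Everything else is a formal verification.
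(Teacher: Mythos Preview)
Your proposal is correct and follows essentially the same argument as the paper: set $z_0=1$, test for each $i$ whether $g$ and $h^{z_i}$ are conjugate by an element of $C_0$, and if a conjugator $c_0$ is found return $c_0 z_i^{-1}\in C_i\subseteq C$. The paper's proof is the same computation with the same bookkeeping about $C_i = C_0 z_i^{-1}$.
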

\begin{proof}
Set $z_0=1$. Given two elements $g,h$ of $G$, the algorithm to decide conjugacy in $C$ consists in running the algorithm for conjugacy in $C_0$ for all the pairs $(g,h^{z_i})$
for $i=0,1,\dots, k$. If there are no conjugators for any of the pairs, then there is no conjugator for $g$ and $h$ in $C$ either, because if $g^x=h$ with $x$ in some $C_i$, then $g$ and $h^{z_i}$ are conjugate by $xz_i$, which is in $C_0$ by hypothesis.

And if there is a conjugator $x$ in $C_0$ for some pair $(g,h^{z_i})$, namely, $x^{-1}gx=z_i^{-1}hz_i$, then $g$ and $h$ are conjugate by $xz_i^{-1}$, which is in $C_i$, using our hypothesis, but now written as $C_i=C_0z_i^{-1}$.
\end{proof}

\begin{prop}
\label{prop:equiv}
Suppose that there is an algorithm which, given two elements $a, b \in H$ can determine whether there exists an $x \in H$ such that $a^x=b$
with $t_i(x) \equiv 0 \mod |t_i(a)|$ for all $i\in I$ and find this $x$ in case it exists.

Then, there is an algorithm such that given two elements of $H$, decides if they are conjugate in $H$, and in the event they are, the algorithm finds a conjugator.
\end{prop}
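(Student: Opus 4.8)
The plan is to derive the statement from Lemma~\ref{lem:reduce}, applied with $G=H$ and $C=H$, taking for $C_0$ precisely the set of candidate conjugators appearing in the hypothesis.

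First I would dispose of the trivial checks: test the necessary condition $t(a)=t(b)$ (decidable by Lemma~\ref{lem:comp}) and, if it fails, report non-conjugacy; and if $t(a)=0$, so that $I=\varnothing$, there is nothing to reduce and the hypothesis already supplies the algorithm. So assume $t(a)=t(b)$ and $I\ne\varnothing$, and set
\[
C_0=\{\,x\in H : t_i(x)\equiv 0 \bmod |t_i(a)| \text{ for all } i\in I\,\}.
\]
The key observation is that $C_0$ is a \emph{finite-index subgroup} of $H$. Since $t\colon H\to\Z^n$ is a homomorphism, $C_0=t^{-1}(\Lambda)$ where $\Lambda=\{\,y\in t(H) : y_i\equiv 0\bmod|t_i(a)|\ \forall i\in I\,\}$ is the kernel of the homomorphism $t(H)\to\prod_{i\in I}\Z/|t_i(a)|\Z$ sending $y$ to $(y_i\bmod|t_i(a)|)_{i\in I}$; thus $[t(H):\Lambda]$ is finite (it divides $\prod_{i\in I}|t_i(a)|$) and computable. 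As $\ker t=\FSym\le C_0$, we get $[H:C_0]=[t(H):\Lambda]<\infty$.

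Next I would compute a right transversal $z_0=1,z_1,\dots,z_k$ of $C_0$ in $H$. This is effective because membership in $C_0$ is decidable (compute $t$ by Lemma~\ref{lem:comp} and test the congruences), and because any prescribed value $v=(v_1,\dots,v_n)\in t(H)$ of $t$ is realised by the explicit word $\prod_{i=2}^{n}g_i^{-v_i}$ over $S$ (using $t(g_i)=e_1-e_i$ and $v_1=-\sum_{i\ge 2}v_i$); enumerating the finitely many cosets of $\Lambda$ in $t(H)$ and lifting them this way produces the $z_j$. Putting $C_j=C_0z_j$ for $0\le j\le k$ gives $H=C_0\sqcup C_1\sqcup\dots\sqcup C_k$ with $C_jz_j^{-1}=C_0$. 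Now the hypothesis of the proposition is exactly an algorithm that, on inputs $g,h\in H$, decides conjugacy by an element of $C_0$ and exhibits a conjugator when one exists — note that for a pair whose first entry is $a$ the set $C_0$ is pinned down by $t(a)$, and that $t(b^{z_j^{-1}})=t(b)=t(a)$, so the relevant $C_0$ is the same across all the calls. Feeding this algorithm, together with $C=H$ and the elements $z_j^{-1}$ in the role of the ``$z_i$'' of Lemma~\ref{lem:reduce}, into that lemma yields an algorithm that decides whether $a$ and $b$ are conjugate in $H$ and produces a conjugator when they are, which is the assertion.

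The only real content, and the step I would spend care on, is the construction of $C_0$ and its transversal: one must recognise that imposing the congruences $t_i(x)\equiv 0$ modulo the nonzero integers $|t_i(a)|$ cuts out a finite-index subgroup of $H$, and check that a transversal of it is genuinely computable and can be written down as concrete words in $g_2,\dots,g_n$. Granting that, everything else is a bookkeeping application of Lemma~\ref{lem:reduce}.
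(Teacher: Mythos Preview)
Your proof is correct and follows essentially the same route as the paper: both identify $C_0=\{x\in H:\ t_i(x)\equiv 0 \bmod |t_i(a)|\ \forall i\in I\}$, observe that $H$ breaks into finitely many classes $C_r=C_0 x_r$ indexed by residue tuples, produce explicit representatives $x_r$ (you via $\prod_{i\ge 2} g_i^{-v_i}$, the paper by a one-line ``we can easily construct $x_r$''), and then invoke Lemma~\ref{lem:reduce}. Your framing of $C_0$ as a finite-index subgroup $t^{-1}(\Lambda)$ and your explicit handling of the edge cases $t(a)\neq t(b)$ and $I=\varnothing$ make the argument slightly more self-contained than the paper's version, but there is no substantive difference in strategy.
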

\begin{proof}
Let $R$ be the set of tuples $ \{r_i\}_{i\in I} \in \Z^{|I|}$ with $0 \leq r_i < |t_i(a)|$.
Observe that $|R|<\infty$.
Let $C=H$, and for $r\in R$, let $C_r=\{g\in H : t_i(g) \equiv r_i \mod |t_i(a)|\}$. Be denote by,  $C_0$ the set $C_{(0,0,\dots, 0)}.$
Given  a tuple $w=(w_1,\dots, w_n)\in \Z^{n}$ and $r\in R$, it is easy to decide whether or not
$w_i \equiv r_i \mod |t_i(a)|$ for all $i\in I$. Hence, we can decide whether or not a given element $g\in H$ belongs to $C_r$.

Moreover, given $r\in R$ with $\sum r_i=0$, we can easily construct $x_r\in H$
such that $t_i(x_r)=r_i$ for all $i.$

We now use Lemma \ref{lem:reduce}.
Observe that we have $C_r x_r^{-1}=C_0$ and all the hypotheses of the lemma are satisfied. This ensures the desired conclusion.
\end{proof}

We shall now attempt to find an algorithm satisfying the hypotheses of Proposition \ref{prop:equiv}.
As noted, for any potential conjugator $x$, $(X_{i,r})x$ is almost equal to $X_{i,r}$ for all $i,r$.
Since any infinite orbit of $a$ is almost equal to $X_{i,r} \cup X_{j,s}$ for some $i,r,j,s$ this means
that if $a$ and $b$ are to be conjugate via such an $x$, then every infinite orbit of $a$ must be almost equal
to some infinite orbit of $b$ and vice versa.

The key technical result is the following.

\begin{prop}
\label{prop:bound}
Let $a,b,x \in H$ and $l_1,\dots, l_n\in \Z$. Suppose that $a^x=b$ with $t_i(x) = l_i |t_i(a)|$ for all $i\in I$.
Further suppose that $X_{i,r} \cup X_{j,s}$ is almost equal to some infinite orbit of $a$ (and hence of $b$).
Then there is a (computable) constant $K=K(a,b)$, depending only on $a$ and $b$, such that  $| |l_i|- |l_j| |\leq K$.
\end{prop}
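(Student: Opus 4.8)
The plan is to track how $x$ must act on a single infinite orbit $\cO$ of $a$ that is almost equal to $X_{i,r}\cup X_{j,s}$ with $i,j\in I$, $i\neq j$, and to extract the constraint $\bigl||l_i|-|l_j|\bigr|\le K$ from the fact that $x$ conjugates $\cO$ (an orbit of $a$) to an orbit of $b$ that is again almost equal to some $X_{i,r'}\cup X_{j,s'}$. First I would fix a computable $z$ (from Lemma~\ref{lem:comp} applied to words for $a$, $b$, and — once we have it — $x$, though the final bound must not depend on $x$) past which $a$, $b$ act as the stated translations on all branches, and note that on the ``tail'' of branch $i$ the orbit $\cO$ of $a$ is, up to the finitely many exceptional points below $z$, exactly the arithmetic progression $X_{i,r}$ traversed by adding $t_i(a)>0$ at each application of $a$ (and similarly branch $j$ is traversed by subtracting $|t_j(a)|$, since one of the two translation lengths is positive and the other negative on an infinite orbit). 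So $\cO$ looks like a bi-infinite cycle: going forward along $a$ we eventually march out to infinity in branch $i$, going backward we march out to infinity in branch $j$.

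Next I would use the hypothesis $t_i(x)=l_i|t_i(a)|$ to pin down the action of $x$ on the tail of $X_{i,r}$: for $m$ large, $(i,m)x=(i,m+l_i|t_i(a)|)$, i.e. $x$ shifts the progression $X_{i,r}$ by exactly $l_i$ steps of the $a$-cycle; likewise on branch $j$ it shifts $X_{j,s}$ by $l_j$ steps of the $a$-cycle (here I must be careful with signs: on branch $j$ the translation length is negative, and whether $x$'s shift advances or retreats the cycle depends on the sign of $l_j$ relative to $t_j(a)$ — this bookkeeping is where I expect the only real friction). Now compare the two descriptions of $xbx^{-1}=a$: on the tail of branch $i$, $a$ advances the cycle by one step, and conjugating by $x$ advances it by $l_i$, holds, then retreats by $l_i$; combining this with the corresponding statement along branch $j$, and using that $\cO x$ is a single orbit of $b$ on which $b$ again advances the branch-$i$ part by one $b$-step and retreats the branch-$j$ part by one $b$-step, forces the net ``loop discrepancy'' to be bounded. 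Concretely: travel along $\cO$ from a fixed basepoint deep in branch $i$, across the finite exceptional region, out to a fixed basepoint deep in branch $j$; this takes some fixed number $N=N(a,b)$ of $a$-steps (computable from the cycle decomposition of $a$, which we can compute by Corollary~\ref{cor:fmorbits} and the discussion after it). Applying $x$ to this path and using that $x$ shifts branch $i$ by $l_i$ and branch $j$ by $l_j$, the image path from the shifted branch-$i$ basepoint to the shifted branch-$j$ basepoint must take $N$ $b$-steps as well; but the number of $b$-steps between the two shifted basepoints is $N$ shifted by $(l_i - \varepsilon_j l_j)$ for an appropriate sign $\varepsilon_j\in\{\pm1\}$ coming from orientation. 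Equating gives a bounded relation between $l_i$ and $\varepsilon_j l_j$, hence between $|l_i|$ and $|l_j|$.

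Therefore I would set $K=K(a,b)$ to be (a small explicit multiple of) $N(a,b)$ plus the total number of exceptional points below $z$ on the relevant branches, all of which are computable from $a$ and $b$ alone once their cycle decompositions are in hand; then $\bigl||l_i|-|l_j|\bigr|\le K$. The main obstacle, as flagged, is purely the sign/orientation bookkeeping: an infinite orbit of $a$ is almost equal to $X_{i,r}\cup X_{j,s}$ with one of $t_i(a),t_j(a)$ positive and the other negative, so ``shift by $l_i$ steps of the cycle'' on the positive branch corresponds to adding $l_i|t_i(a)|$ to the second coordinate while on the negative branch ``shift by $l_j$ steps'' corresponds to subtracting, and one must check that the contributions combine as $|l_i|-|l_j|$ (with the right sign) rather than $|l_i|+|l_j|$ — the latter would not be bounded and the proposition would fail, so this is exactly the point where the hypothesis ``$X_{i,r}\cup X_{j,s}$ lies in a single orbit'' does its work. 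Everything else is a finite, computable comparison of two traversals of the same bi-infinite cycle.
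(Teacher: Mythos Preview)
Your approach matches the paper's in spirit: both exploit that $x$ is a bijection from the $a$-orbit $\cO_a$ onto the corresponding $b$-orbit $\cO_b$, acts by the prescribed translations on the tails, and extract a relation among $l_i$, $l_j$ and the sizes of the finite ``exceptional'' middle parts of the two orbits. The paper does this by writing $\cO_a$ and $\cO_b$ as the two common tail-progressions past a computable threshold $z$ plus finite sets $S$ and $T$ respectively, then comparing cardinalities of the finite pieces left over once the tail-bijections induced by $x$ are peeled off; this yields $l_i + l_j = |S| - |T|$, whence $||l_i| - |l_j|| \leq |S| + |T|$.

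Your step-counting version is equivalent to this, but there is a genuine slip where you write ``the number of $b$-steps between the two shifted basepoints is $N$ shifted by $(l_i - \varepsilon_j l_j)$''. The $N$ appearing here cannot be the same $N$ you defined one line earlier as the $a$-step count from $p$ to $q$ along $\cO_a$: the orbits $\cO_a$ and $\cO_b$ share their tails but have \emph{different} finite middles, so the $b$-step count from $p$ to $q$ along $\cO_b$ (having chosen $p,q$ deep enough to lie on both orbits) is a different integer $N_b$, and what you actually obtain after equating is $l_i - \varepsilon_j l_j = N - N_b$, bounded in absolute value by $|S| + |T|$. As literally written, your argument would conclude $l_i = \varepsilon_j l_j$, which is false in general. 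Your later remark about adding ``the total number of exceptional points below $z$'' to $K$ suggests you sense there is another contribution; in fact that contribution \emph{is} the whole bound, and once you separate $N_a$ from $N_b$ the argument goes through and coincides with the paper's cardinality comparison. The sign issue you flag resolves to $\varepsilon_j=-1$ (since $t_i(a)$ and $t_j(a)$ have opposite signs on an infinite orbit), giving $l_i+l_j$ bounded, from which $||l_i|-|l_j||$ is bounded by the reverse triangle inequality.
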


\begin{proof}

 We may algorithmically find integers $z$, $z'$ such that
$$
\begin{array}{rcl}
(k,m)a & = & (k, m+ t_k(a) ), k=i,j \ \forall m \geq z \\
(k,m)b & = & (k, m+ t_k(b) ), k=i,j \ \forall m \geq z \\
(k,m)x & = & (k, m+ t_k(x) ), k=i,j \ \forall m \geq z'.
\end{array}
$$

For convenience for what follows, since we can increase arbitrarily the number $z'$, we shall further assume that $z' \geq z+ |t_k(x)| \geq z$ for $k=i,j$,
and that $z' \equiv z$ modulo both $|t_i(a)|$ and $|t_j(a)|$.

Notice that $(X_{i,r})x$ is almost equal to $X_{i,r}$ and $(X_{j,s})x$
is almost equal to $X_{j,s}$.

Let $\cO_a$ be the infinite orbit of $a$ which is almost equal to $X_{i,r} \cup X_{j,s}$
and $\cO_b$ be the corresponding orbit for $b$. Then $X_{i,r} \cap \cO_a$ contains
$\{ (i, m) \ : \ m \equiv r \mod |t_i(a)|, m \geq z+ \epsilon \}$ for some $\epsilon$
such that $|\epsilon| \leq |t_i(a)|$ chosen such that $z + \epsilon \equiv r \mod |t_i(a)|$.
Similarly, $X_{i,r} \cap \cO_b$ also contains $\{ (i, m) \ : \ m \equiv r \mod |t_i(a)|, m \geq z+ \epsilon \}$
for the same $\epsilon$. We do the same for $j$, so that both $X_{j,s} \cap \cO_a$
and $X_{j,s} \cap \cO_b$ contain $\{ (j, m) \ : \ m \equiv s \mod |t_j(a)|, m \geq z+ \delta \}$
for some number $\delta$ such that $z + \delta \equiv s \mod |t_j(a)|$ and $|\delta| \leq |t_j(a)|$.

We can then write $\cO_a$ as the (disjoint) union of
$\{ (i, m) \ : \ m \equiv r \mod |t_i(a)|, m \geq z+ \epsilon \}$,
$\{ (j, m) \ : \ m \equiv s \mod |t_j(a)|, m \geq z+ \delta \}$
and some finite set $S$. Similarly, $O_b$ is the (disjoint)
union of $\{ (i, m) \ : \ b \equiv r \mod |t_i(a)|, m \geq z+ \epsilon \}$,
$\{ (j, m) \ : \ m \equiv s \mod |t_j(a)|, m \geq z+ \delta \}$  and some finite set $T$.

The hypotheses imply that $x$ restricts to a bijection from $\cO_a$ to $\cO_b$
and eventually acts as translation at both ends, of amplitudes $l_i$ and $l_j$.
As $z' \equiv z \mod |t_i(a)|$, we know that $x$ restricts to a bijection from
$\{ (i, m) \ : \ m \equiv r \mod |t_i(a)|, m \geq z'+ \epsilon \}$ to
$\{ (i, m) \ : \ m \equiv r \mod |t_i(a)|, m \geq z'+ \epsilon + l_i|t_i(a)| \}$.
Similarly,  $x$ also restricts to a bijection from
$\{ (j, m) \ : \ m \equiv s \mod |t_j(a)|, m \geq z'+ \delta \}$ to
$\{ (j, m) \ : \ m \equiv s \mod |t_j(a)|, m \geq z'+ \delta + l_j|t_j(g)| \}$.

Hence $x$ must also restrict to a bijection between the following finite sets:
$\{ (i, m) \ : \ m \equiv r \mod |t_i(a)|, z'+ \epsilon > m \geq z+\epsilon\}
\cup \{ (j, m) \ : \ m \equiv s \mod |t_j(a)|, z'+ \delta > m \geq z+\delta \} \cup S$
and $\{ (i, m) \ : \ m \equiv r \mod |t_i(a)|, z'+ \epsilon + l_i|t_i(a)| > m \geq z+\epsilon\}
\cup \{ (j, m) \ : \ m \equiv s \mod |t_j(a)|, z'+ \delta + l_j|t_j(a)| > b \geq z+\delta \} \cup T$.
Comparing cardinalities, we get that
$$
2(z'-z) + |S| = 2(z-z') + |T| + l_i + l_j.
$$

Hence, $|l_i| \leq |S| + |T| + |l_j| $ and $|l_j| \leq |S| + |T| + |l_i| $.
\end{proof}

In order to exploit Proposition \ref{prop:bound}, we shall define the following equivalence relation.

\begin{defn}\label{def:ends}
Given $g \in H$ we define an equivalence relation $\sim_g$ on $\{ 1, \ldots, n \}$, as the one generated by setting $i\sim_g j$ if and only if some infinite orbit of $g$
is almost equal to $X_{i,r} \cup X_{j,s}$, for some $r,s$.
\end{defn}

\begin{rem}
This equivalence relation depends on $g$.
However, if $g^x=h$ with $t_i(x) \equiv 0 \mod |t_i(g)|$ for all $i$, then $\sim_g$ and $\sim_h$ are the same  equivalence
relation.
\end{rem}

The strategy will now be to repeatedly use Proposition \ref{prop:bound}
to show that the translation lengths $t_i(x)$ for all $i\in I$ within an
equivalence class are within bounded distance of each other.
However, in order to obtain a solution to our problem, we will need to bound these lengths absolutely.
Of course, we cannot obtain this bound for any $x$, but we can prove that if the elements are conjugate, there always is a conjugator within a computable bound.
To find that, we can multiply $x$ on
the right by any element of the centraliser of $g$ without changing its properties
and we shall bound the translation lengths of $x$ after multiplying by an appropriate element.

In order to do this, we need to define certain elements of the centraliser.

\begin{lem}\label{lem:centraliser}
Let $g \in H$, with $g=\sigma_1 \ldots \sigma_p$
being the expression of $g$ as a product of disjoint cycles.
Choose $i\in\{1,2,\dots,n\}$ and let $[i]$ be its equivalence class with respect to $\sim_g$.
Let $g_i$ be the product of all cycles among the $\sigma_1,\ldots,\sigma_p$ which are infinite
and whose support is almost equal to $X_{j,r} \cup X_{k,s}$ for some $j,k\in[i]$.
Then $g_i \in H$ and is in the centraliser of $g$.
\end{lem}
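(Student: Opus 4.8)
The plan is to show that $g_i$ is a well-defined element of $\Sym$, that it lies in $H$, and that it commutes with $g$. The element $g_i$ is a product of certain disjoint cycles of $g$, so it is certainly a permutation of $X$; the point is to check it is in $H$ (i.e.\ is eventually a translation on each branch) and that it centralises $g$. First I would observe that the collection of cycles defining $g_i$ is $g$-invariant in a trivial way: $g$ permutes its own cycles by conjugation — in fact it fixes each of them — so $g$ commutes with any sub-product of its disjoint cycles, in particular with $g_i$ and with its ``complement'' $g \cdot g_i^{-1}$. This already gives $[g,g_i]=1$ as elements of $\Sym$, before worrying about $H$-membership.

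Next I would verify $g_i \in H$. By Corollary \ref{cor:orbits} and the surrounding discussion, each infinite cycle $\sigma$ of $g$ whose support is almost equal to $X_{j,r}\cup X_{k,s}$ acts, outside a finite set, as translation by $+|t_j(g)|=t_j(g)$ on the $X_{j,r}$ end (where $t_j(g)>0$) and by $-|t_k(g)|=t_k(g)$ on the $X_{k,s}$ end (where $t_k(g)<0$); on branches other than $j,k$ it has finite support. Summing over the finitely many cycles that make up $g_i$: for a branch $j \in [i]$, the cycles contributing a nontrivial eventual translation on branch $j$ are exactly those almost equal to $X_{j,r}\cup X_{k,s}$ for the various $r$ with $0\le r<t_j(g)$ and appropriate $k\in[i]$, and each such cycle contributes eventual translation $+t_j(g)$ on the arithmetic progression $X_{j,r}$; since these progressions partition (a cofinite subset of) branch $j$, the element $g_i$ is eventually translation by $t_j(g)$ on all of branch $j$. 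For a branch $j\notin[i]$, no cycle in $g_i$ has support meeting branch $j$ infinitely, so $g_i$ has finite support there, i.e.\ eventual translation $0$. Hence $g_i$ satisfies \eqref{eq:translation} with $t_j(g_i)=t_j(g)$ for $j\in[i]$ and $t_j(g_i)=0$ otherwise, so $g_i\in H$ (one should note $\sum_j t_j(g_i)=0$, which holds because summing $t_j(g)$ over an equivalence class $[i]$ gives $0$: every infinite orbit almost equal to $X_{j,r}\cup X_{k,s}$ pairs a $+$ end in $[i]$ with a $-$ end in $[i]$, so the positive and negative contributions within $[i]$ cancel). A cleaner way to package this is: $g_i$ and $g\cdot g_i^{-1}$ together account for all of $g$'s cycles, $g\cdot g_i^{-1}$ is a product of disjoint cycles whose infinite ones all lie over branches in the complement of $[i]$ together with finitely-supported pieces, and one checks both factors individually satisfy \eqref{eq:translation}.

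I expect the main (minor) obstacle to be bookkeeping rather than anything deep: one must be careful that ``almost equal to $X_{j,r}\cup X_{k,s}$'' really does force $j,k$ to lie in the \emph{same} $\sim_g$-class (immediate from Definition \ref{def:ends}), and that the finitely many finite cycles of $g$ — which are not included in $g_i$ — do not cause trouble; they are disjoint from the infinite cycles and contribute nothing to any $t_j$, so they are harmless. The only genuinely substantive point is that within one $\sim_g$-class the eventual translation lengths sum to zero, which is what makes $g_i$ land in $H$ rather than merely in $\Sym$; this follows because $t\colon H\to\Z^n$ has image in the hyperplane $\sum x_j=0$ and, more to the point, because the infinite orbits give a perfect matching between the positive-translation branch-residues and the negative ones inside each class. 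Once these are in hand, $g_i\in H$ and $[g,g_i]=1$ follow as described, completing the proof.
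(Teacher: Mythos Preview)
Your proposal is correct and follows the same approach as the paper: $g_i$ commutes with $g$ as a subproduct of its disjoint cycles, and $g_i\in H$ because on each branch $j\in[i]$ it eventually agrees with $g$ (the paper states this directly---every infinite cycle meeting branch $j$ infinitely is included in $g_i$---rather than summing over the residue classes $X_{j,r}$ as you do), while on branches outside $[i]$ it has finite support. Your check that $\sum_j t_j(g_i)=0$ is superfluous: once $g_i$ is a permutation of $X$ that is eventually a translation on each branch, it lies in $H$ by definition, and the sum-to-zero condition is then a consequence rather than something to verify.
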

\begin{proof} The fact that $g_i$ commutes with $g$ is clear.
To see that $g_i \in H$ note that if the support of $g_i$ meets $X_j$ in an infinite set,
then $j$ must be equivalent to $i$ and every infinite cycle whose support meets $X_j$
in an infinite set appears in $g_i$. Hence $g_i$ agrees with $g$ on all but finitely many points of $X_j$.
\end{proof}

Observe that by construction, the element $g_i$ produced in this lemma satisfies that $t_j(g_i)=t_j(g)$ for all $j\in[i]$.

\begin{prop}
\label{cor:zeros}
Let $a,b,x \in H$ and suppose that $a^x=b$ with $t_i(x) \equiv 0 \mod |t_i(a)|$ for all $i\in I$.
Let $i_1, \ldots, i_k$ be a set of representatives of the equivalence classes in $I$
under the equivalence relation $\sim_a$ (or $\sim_b$).
Then there exists an $x' \in H$ such that  $a^{x'}=b$ with $t_i(x') \equiv 0 \mod |t_i(a)|$
for all $i \in I$ and $t_{i_1}(x')=\ldots=t_{i_k}(x')=0$.
\end{prop}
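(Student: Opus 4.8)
The plan is to use the centraliser elements $a_{i_1},\dots,a_{i_k}$ produced by Lemma~\ref{lem:centraliser} to kill the translation lengths $t_{i_1}(x),\dots,t_{i_k}(x)$ one representative at a time, while using Proposition~\ref{prop:bound} to control the translation lengths of the other branches in each equivalence class. First I would set up notation: for each representative $i_s$ write $t_{i_s}(x)=l_{i_s}|t_{i_s}(a)|$ (legitimate since $t_{i_s}(x)\equiv 0\bmod|t_{i_s}(a)|$), and recall from the remark after Lemma~\ref{lem:centraliser} that $t_j(a_{i_s})=t_j(a)$ for all $j\in[i_s]$, while $t_j(a_{i_s})=0$ for $j\notin[i_s]$ since the support of $a_{i_s}$ meets $X_j$ in only finitely many points for such $j$. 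The key observation is that replacing $x$ by $x\,a_{i_s}^{-l_{i_s}}$ does not change the property $a^x=b$ (because $a_{i_s}$ centralises $a$), keeps all translation lengths divisible by the appropriate $|t_i(a)|$, and changes $t_j(x)$ by $-l_{i_s}\,t_j(a)$ precisely for $j\in[i_s]$ and leaves $t_j(x)$ unchanged for $j\notin[i_s]$. In particular it sends $t_{i_s}(x)$ to $0$ and leaves the representatives of the other classes untouched.

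Next I would carry out the replacement for each class in turn. Fix a class $[i_s]$ and set $x':=x\,a_{i_s}^{-l_{i_s}}$. Then $t_{i_s}(x')=0$, and for any other $j\in[i_s]$ we need to know that $t_j(x')$ is not just divisible by $|t_j(a)|$ but in fact computably bounded; otherwise repeating over all classes would still leave us with unbounded data and no algorithm. This is where Proposition~\ref{prop:bound} enters: since $[i_s]$ is generated by the relation ``$X_{i,r}\cup X_{j,s}$ is almost equal to an infinite orbit'', we can walk along a chain $i_s=j_0,j_1,\dots,j_m=j$ inside the class where consecutive indices are linked by such an orbit, and Proposition~\ref{prop:bound} gives $\bigl|\,|l_{j_t}|-|l_{j_{t+1}}|\,\bigr|\le K(a,b)$ at each step. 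Summing along the chain (whose length is at most $n$) yields $\bigl|\,|l_j|-|l_{i_s}|\,\bigr|\le nK(a,b)$, and since $l_{i_s}=0$ after the replacement we get $|l_j|\le nK(a,b)$, hence $|t_j(x')|\le nK(a,b)\cdot|t_j(a)|$, a computable bound. Performing the replacements for all representatives $i_1,\dots,i_k$ (their supports being essentially disjoint across classes, the operations do not interfere) produces a single element, which I will rename $x'$, with $a^{x'}=b$, all translation lengths divisible by the right moduli, $t_{i_1}(x')=\dots=t_{i_k}(x')=0$, and every $t_j(x')$ for $j\in I$ bounded by a computable constant depending only on $a$ and $b$.

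The main obstacle I anticipate is bookkeeping rather than a genuine conceptual difficulty: one must check that the successive multiplications by the $a_{i_s}^{-l_{i_s}}$ commute well enough that performing them in any order, and cumulatively, still gives $a^{x'}=b$ and preserves the divisibility of all translation lengths — this follows because each $a_{i_s}$ centralises $a$ and because $t_j(a_{i_s})$ is a multiple of $|t_j(a)|$ (indeed equals $t_j(a)$) for $j\in[i_s]$ and vanishes for $j$ outside the class. A secondary point to be careful about is that Proposition~\ref{prop:bound} is stated for a pair $i,j$ with $X_{i,r}\cup X_{j,s}$ almost equal to an infinite orbit, i.e.\ for adjacent indices in the graph defining $\sim_a$; the passage to arbitrary $j$ in the class requires the chain argument and the (harmless) observation that $K$ can be taken uniform over all the finitely many such adjacent pairs. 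Once these are in hand, the conclusion is immediate, and — looking ahead — the bound obtained here is exactly what will let us enumerate finitely many candidate conjugators and reduce, via Proposition~\ref{prop:fsym} and Lemma~\ref{lem:reduce}, to conjugacy by an element of $\FSym$.
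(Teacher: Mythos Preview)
Your approach is essentially the same as the paper's: multiply $x$ by suitable powers of the centraliser elements $a_{i_1},\dots,a_{i_k}$ from Lemma~\ref{lem:centraliser} to kill the translation lengths at the chosen representatives, using that $t_j(a_{i_s})=t_j(a)$ for $j\in[i_s]$ and $t_j(a_{i_s})=0$ otherwise. Two small remarks. First, you invoke Proposition~\ref{prop:bound} and the chain argument to bound the remaining $t_j(x')$, but that is not part of the present statement and is not needed to carry out the replacements (the operations for different classes are independent regardless of size); the paper defers that bounding step to Corollary~\ref{cor:bound}, so your proof here over-shoots. Second --- and this is a slip the paper's own formula shares --- since $a_{i_s}$ centralises $a$ (not $b$), the correction should be on the left, $x'=a_{i_s}^{-l_{i_s}}x$, rather than on the right, for $a^{x'}=b$ to hold; the translation-length computation is unaffected since $t$ is a homomorphism.
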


\begin{proof} Apply Lemma \ref{lem:centraliser} to $a$ and to all the equivalence classes for $\sim_a$. For the class $[i_1]$, we construct an element $a_{i_1}$ such that $t_j(a_{i_1})=t_j(a)$ for all $j\in[i_1]$, and which is in the centraliser of $a$. Recall that the translation for $x$ satisfies $t_{i_1}(x)=l_{i_1}t_{i_1}(a)$, so we can replace $x$ by $xa_{i_1}^{-l_1}$, and this new element still conjugates $a$ into $b$, and has $t_{i_1}$ equal to zero.

The element $x'$ will then be $xa_{i_1}^{-l_1}\ldots a_{i_k}^{-l_k}$.
\end{proof}

\begin{cor}
\label{cor:bound}
Let $a,b,x \in H$ and suppose that $a^x=b$ with $t_i(x) \equiv 0 \mod |t_i(a)|$ for all $i\in I$.
Then there exists an $x' \in H$ such that $a^{x'}=b$ and such that $\sum_{i\in I} |t_i(x')|$ is
bounded by a number that depends only on $a$ and $b$ and is computable.
\end{cor}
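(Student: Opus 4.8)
The plan is to combine Proposition~\ref{cor:zeros} with repeated applications of Proposition~\ref{prop:bound}, working one $\sim_a$-equivalence class at a time. Starting from the given conjugator $x$, first apply Proposition~\ref{cor:zeros} to obtain $x'$ with $a^{x'}=b$, $t_i(x')\equiv 0 \bmod |t_i(a)|$ for all $i\in I$, and $t_{i_1}(x')=\cdots=t_{i_k}(x')=0$, where $i_1,\dots,i_k$ are representatives of the $\sim_a$-classes meeting $I$. Write $t_i(x')=l_i|t_i(a)|$ for $i\in I$; we must bound $\sum_{i\in I}|l_i|$ (equivalently $\sum_{i\in I}|t_i(x')|$, since the $|t_i(a)|$ are fixed and computable) by a quantity depending only on $a,b$.

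The key step is to propagate the bound through each equivalence class. Fix a class $[i_m]$ with representative $i_m$, so $l_{i_m}=0$. By definition of $\sim_a$, the class $[i_m]$ is connected via the relation ``$X_{i,r}\cup X_{j,s}$ is almost equal to an infinite orbit of $a$'' for suitable $r,s$; pick a spanning tree of this graph on the vertex set $[i_m]$, rooted at $i_m$. For each edge $\{i,j\}$ of the tree there is, by construction, a choice of $r,s$ so that $X_{i,r}\cup X_{j,s}$ is almost equal to an infinite orbit of $a$ and of $b$, and Proposition~\ref{prop:bound} gives a computable $K=K(a,b)$ with $\big||l_i|-|l_j|\big|\le K$. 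Traversing the tree outward from the root, an easy induction on the distance from $i_m$ shows $|l_i|\le (\text{depth of tree})\cdot K \le (n-1)K$ for every $i\in[i_m]$. Doing this for each of the $k\le n$ classes gives $|l_i|\le (n-1)K$ for all $i\in I$, hence $\sum_{i\in I}|t_i(x')|\le \sum_{i\in I}(n-1)K\,|t_i(a)| =: B(a,b)$, a computable bound depending only on $a$ and $b$. Taking this $x'$ as the required element completes the proof.

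The only mild subtlety — and the main thing to get right — is bookkeeping: one must check that the hypotheses of Proposition~\ref{prop:bound} genuinely apply to each tree edge, i.e.\ that for each edge $\{i,j\}$ there really is a pair $(r,s)$ with $X_{i,r}\cup X_{j,s}$ almost equal to a common infinite orbit of $a$ and $b$. This is exactly what membership in the same $\sim_a$-class encodes (after noting, via the remark following Definition~\ref{def:ends}, that $\sim_a=\sim_b$ here because $t_i(x')\equiv 0 \bmod |t_i(a)|$), and that the constant $K$ from Proposition~\ref{prop:bound} can be taken uniform over the finitely many relevant pairs $(i,j,r,s)$ — one simply takes the maximum of the finitely many computable constants produced. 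Everything else is elementary: the graph on each class has at most $n$ vertices, so its spanning tree has depth at most $n-1$, and the triangle-inequality induction is immediate. No infinite process is involved, so all the bounds are explicit and computable from $a$ and $b$.
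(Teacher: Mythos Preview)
Your proof is correct and follows the same strategy as the paper: apply Proposition~\ref{cor:zeros} to zero out the representative translations, then use Proposition~\ref{prop:bound} within each $\sim_a$-class to bound the remaining $|l_i|$. In fact your spanning-tree argument is more careful than the paper's, which tacitly bounds $|l_j|\le K$ directly for every $j\in[i]$ rather than propagating through a chain of length at most $n-1$ as the definition of $\sim_a$ (as the relation \emph{generated} by shared infinite orbits) actually requires.
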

\begin{proof} Repeated use of Proposition \ref{prop:bound} applied to the element produced by Proposition \ref{cor:zeros} will produce the desired result. Fixed an equivalence class $[i]$ of rays, there is one index $i\in [i]$ such that $t_i(x')=0$, i.e. setting $l_i=0$, $t_i(x')=l_i|t_i(a)|$. Then, for $j\in [i]$, $t_j(x')=l_j |t_j(a)|$, and $||l_i|-|l_j||=|l_j|\leq K$, where $K$ is the constant of Proposition \ref{prop:bound}.  So the total translation $\sum_{i\in I}  |t_i(x')|\leq n \cdot K\cdot M$, where $M= \max\{|t_i(a)| \,:\, i\in I\}$.
\end{proof}

We need a last lemma dealing with branches outside $I$.
\begin{lem}\label{lem:big-bound}
Let $a,b,x \in H$ and suppose that $a^x=b$ $t_i(x) \equiv 0 \mod |t_i(a)|$ for all $i\in I$.
Then there exists an $x' \in H$ such that $a^{x'}=b$ and such that $\sum_{i= 1}^n |t_i(x')|$ is
bounded by a number that depends only on $a$ and $b$ and is computable.
\end{lem}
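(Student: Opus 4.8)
The plan is to bootstrap Corollary~\ref{cor:bound}, which already controls $\sum_{i\in I}|t_i(x')|$, to a bound on the full sum $\sum_{i=1}^n|t_i(x')|$. The point is that the branches outside $I$ are almost fixed by both $a$ and $b$, so we have complete freedom to adjust the translation lengths of a conjugator on those branches, provided we stay inside $H$ (that is, the translation lengths must sum to zero) and provided the adjustment commutes with $a$. The natural tool is to multiply $x'$ on the right by elements of the centraliser of $a$ supported away from $\bigcup_{i\in I}X_i$, exactly as in the proof of Proposition~\ref{cor:zeros}, but now using the branches outside $I$ as the ``free'' coordinates in which to dump translation length.

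First I would start from the element $x'$ given by Corollary~\ref{cor:bound}, so that $\sum_{i\in I}|t_i(x')|\le N_0$ for a computable $N_0=N_0(a,b)$. Write $t(x')=(t_1(x'),\dots,t_n(x'))$; since $x'\in H$ we have $\sum_{i=1}^n t_i(x')=0$, hence $\bigl|\sum_{i\notin I}t_i(x')\bigr|=\bigl|\sum_{i\in I}t_i(x')\bigr|\le N_0$, but the individual $t_i(x')$ for $i\notin I$ could still be arbitrarily large (with cancellation). To kill this, observe that for $i,j\notin I$ the infinite cycles of $a$ meeting $X_i$ or $X_j$ do not exist (since $t_i(a)=t_j(a)=0$, every orbit of $a$ in $X_i$ is finite), but more importantly we can build an element $c_{ij}\in H$ supported on $X_i\sqcup X_j$ which is a single infinite cycle with $t_i(c_{ij})=1$, $t_j(c_{ij})=-1$, and which commutes with $a$ up to a finite-support discrepancy --- and we may absorb that discrepancy using Proposition~\ref{prop:fsym} at the end. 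Concretely, pick one representative index, say $i_0\notin I$ (if $I=\{1,\dots,n\}$ there is nothing to prove), and for each $j\notin I$ with $j\neq i_0$ let $c_j$ be an element of $H$ with $t_{i_0}(c_j)=-1$, $t_j(c_j)=1$, $t_k(c_j)=0$ otherwise, whose support is contained in a region on which $a$ acts trivially (possible after discarding finitely many points, since outside a computable finite set $a$ fixes all of $X_{i_0}$ and $X_j$). Then replace $x'$ by $x''=x'\prod_{j\notin I,\,j\ne i_0}c_j^{-t_j(x')}$: this makes $t_j(x'')=0$ for all $j\notin I$, $j\neq i_0$, and $t_{i_0}(x'')=t_{i_0}(x')+\sum_{j\notin I,j\ne i_0}t_j(x')$, which has absolute value at most $N_0$. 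Since each $c_j$ commutes with $a$ away from a finite set, $(x'')^{-1}a x''$ and $b$ agree away from a finite set and have the same image under $t$; by Proposition~\ref{prop:fsym} we can test whether they are in fact conjugate by an element of $\FSym$ and, if so, fold that $\FSym$-conjugator into $x''$, obtaining the genuine conjugator with $\sum_{i=1}^n|t_i(x'')|\le N_0+|t_{i_0}(x')|\le 2N_0$, say --- all bounds computable from $a$ and $b$.

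The main obstacle is the commutation issue: the elements $c_j$ I want to use do not literally lie in the centraliser of $a$, only in its ``centraliser modulo $\FSym$'', because $a$ may move finitely many points of $X_{i_0}$ and $X_j$ and these points may lie in the support of $c_j$. There are two ways to handle this, and I would present whichever is cleaner: either (i) choose the support of each $c_j$ to avoid the finite set $\{x\in X_{i_0}\cup X_j : xa\ne x\}$ entirely, so that $c_j$ genuinely commutes with $a$ --- this is possible since that set is finite and computable (it is contained in $\{(k,m): m\le z\}$ for the computable $z$ of equation~\eqref{eq:translation}), and an infinite cycle with the required translation data supported on the cofinite remainder certainly exists; or (ii) allow the discrepancy and clean it up at the very end with one application of Proposition~\ref{prop:fsym}, as sketched above. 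Option (i) is preferable because it keeps $x''$ an exact conjugator throughout and makes the bound transparent. Once the bound $\sum_{i=1}^n|t_i(x'')|\le C(a,b)$ is in place with $C$ computable, the statement follows. I should also note for the record that this lemma is exactly what closes the loop: combined with Corollary~\ref{cor:bound}, Proposition~\ref{prop:equiv}, and the fact that there are only finitely many elements $x$ of $H$ with $t(x)$ in a bounded box and support contained in a bounded region (the support being controlled because, once $t(x)$ is bounded, a conjugator may be taken to differ from a fixed reference element only by an $\FSym$ element whose support is computably bounded via the argument in the proof of Proposition~\ref{prop:fsym}), one obtains a finite, effectively searchable set of candidate conjugators, proving Theorem~\ref{thm:main}.
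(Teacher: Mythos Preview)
Your approach is essentially the same as the paper's: start from the $x'$ given by Corollary~\ref{cor:bound}, pick a base index $i_0\notin I$, and use shifts supported on the branches outside $I$ to zero out $t_j(x')$ for all $j\notin I$, $j\ne i_0$; the remaining $t_{i_0}$ is then bounded via the zero-sum constraint, giving the bound $2N_0$.

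One small slip worth fixing: if $\supp(c_j)\cap\supp(a)=\emptyset$ then $c_j$ centralises $a$, but multiplying $x'$ on the \emph{right} by an element of $C(a)$ sends $b=a^{x'}$ to a conjugate of $b$, not to $b$ itself. You want either to multiply on the left (the paper writes $x'=yx$ with $\supp(y)\cap\supp(a)=\emptyset$, so $a^{yx}=a^x=b$), or to choose $\supp(c_j)$ disjoint from $\supp(b)$ as well---equally easy, since $b$ also has finite support on each branch outside $I$. With that correction your option~(i) goes through verbatim and option~(ii), together with the detour through Proposition~\ref{prop:fsym}, becomes unnecessary.
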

\begin{proof}
By Corollary \ref{cor:bound}, we can assume that there exists  a computable constant $K$  depending only on $a$ and $b$ such that
$\sum_{i\in I} |t_i(x)|<K$.

Let $I^c$ denote $\{1,\dots,n\}-I=\{i_1, \ldots, i_s \}$. If $s=1$, then since $\sum_{i= 1}^n t_i(x)=0$, we get that $|t_{i_1}(x)| < K$ and hence that  $\sum_{i= 1}^n |t_i(x)|<2K$ and we are done. 

Otherwise, for each $j \neq 1$, define $y_j$ to be an element of $H_n$ such that $\supp(y_j) \cap \supp(a) = \emptyset$ and such that 
$$
\begin{array}{rcrr}
t_{i_1}(y_j) & = & t_{i_j}(x) \\
t_{i_j}(y_j) & = & -t_{i_j}(x) \\
t_i(y_j) & = & 0, & \text{ otherwise.}
\end{array}
$$

Such an element clearly exists, since $a$ fixes almost every point of the rays $i_1$ and $i_j$. 

Now let $y$ be the product of the $y_j$, and let $x'=yx$. Since each $y_j$ commutes with $a$ we have that, 
$$
a^{x'} = a^x =b.
$$

Moreover, $t_i(x')=t_i(x)$ for all $i \in I$ and $t_{i_j}(x')=0$ for $j=2, \ldots s$. Repeating the argument for when $s=1$, we get that $\sum_{i= 1}^n |t_i(x')|<2K$.
\end{proof}

We are now ready to prove the main theorem of this note.
\begin{proof}[Proof of Theorem \ref{thm:main}]
By Proposition \ref{prop:equiv}, we need to show that there is an algorithm which, given two elements $a,b\in H$, the algorithm determines whether
there exists $x\in H$ such that $a^x=b$ with $t_i(x)\equiv 0 \mod |t_i(a)|$ for all $i\in I.$
By Lemma \ref{lem:big-bound}, there is a computable number $N$, depending on $a$ and $b$ such that  if such conjugator $x$ exists,
then  $\sum |t_i(x)|<N$.

Let $S$ denote the finite family of tuples $(w_1,\dots, w_n)\in \Z^n$
satisfying $\sum |w_i|<N$, $\sum w_i=0$ and $w_i\equiv 0 \mod |t_i(a)|$ for all $i\in I.$ For each tuple $s\in S$,
we can produce an element $z_s\in H$ such that $t(z_s)=s$. We now let $C=\{x\in H : t_i(x)\equiv 0 \mod |t_i(a)|, \sum |t_i(x)|<N\}$
and for $s\in S$,  $C_s=\{x\in H : t(x)=s\}$. Set $C_0=C_{(0,0,\dots,0)}$ and observe that $C_sz_s^{-1}=C_0$.
We now use again Lemma \ref{lem:reduce}, and we conclude the desired algorithm exists, if there exists an algorithm
that can decide, for any $g,h \in H$, if there exists an $x \in H$ such that $g^x=h$ and $t_i(x) = 0$ for all $i$, and
find $x$ in case it exists. But observe that if all translations are zero, then $x\in\FSym$ and the algorithm to find it is exactly the one provided by Proposition \ref{prop:fsym} in Section \ref{sec:Fsym}.
\end{proof}

\bigskip

\noindent{\textbf{\Large{Acknowledgments}}}

Part of this work was done as at Centre de Recerca Matematica (CRM), Bellaterra in the Fall semester of 2012,
during the research program ``Automorphisms of Free Groups: Algo., Geom. and Dyn''.
The authors are grateful to CRM for its support and hospitality.

The first and second author were supported by MCI (Spain) through
project  MTM2011-25955. The first author is also supported by the swiss SNF grant: FN 200020-137696/1.

\bibliographystyle{amsplain}

\begin{thebibliography}{99}

\bibitem{Brown87}
Kenneth S. Brown,
\newblock {\em Finiteness properties of groups}.
\newblock Journal of Pure and Applied Algebra {\bf 44} (1987), no. 1-3, 45 --75,
\vskip-0.4cm \null

\bibitem{Higman}
Graham Higman,
\newblock {\em Finitely presented infinite simple groups.}
\newblock Notes on Pure Mathematics, No. {\bf 8} (1974). Department of Pure Mathematics, Department of Mathematics,
I.A.S. Australian National University, Canberra, 1974. vii+82 pp.
\vskip-0.4cm \null

\bibitem{Houghton}
C. H. Houghton,
\newblock {\em The Þrst cohomology of a group with permutation module coeficients}.
\newblock Archiv der Mathematik {\bf 31} (1978), 254--258.
\vskip-0.4cm \null

\bibitem{Johnson}
D. L. Johnson,
\newblock {\em Embedding some recursively presented groups.}
\newblock Groups St. Andrews 1997 in Bath, II, 410--416,
\newblock London Math. Soc. Lecture Note Ser., {\bf 261}, Cambridge Univ. Press, Cambridge, 1999.
\vskip-0.4cm \null

\bibitem{SRLee}
Sang Rae Lee,
\newblock {\em Geometry of Houghton's Groups.}
\newblock  arXiv:1212.0257 (Preprint 2012).
\vskip-0.4cm \null

\bibitem{Miller}
Charles F. Miller,
\newblock {\em Decision problems for groups—survey and reflections.}
\newblock Algorithms and classification in combinatorial group theory (Berkeley, CA, 1989), 1--59,
Math. Sci. Res. Inst. Publ., 23, Springer, New York, 1992.
\vskip-0.4cm \null

\bibitem{Rover}
Class H. E. W. R\"{o}ver,
\newblock {\em Subgroups of finitely presented simple groups},
\newblock Phd Thesis, University of Oxford, 1999.
\vskip-0.4cm \null

\bibitem{st.john-green}
Simon St. John-Green,
\newblock{\em Centralisers in Houghton's groups}.
\newblock arXiv:1207.1597v1 (Preprint 2012).
\vskip-0.4cm \null



\bibitem{Wiegold}
James Wiegold,
\newblock {\em Transitive groups with fixed-point-free permutations. II.}
Arch. Math. (Basel) {\bf 29} (1977), no. 6, 571--573.
\vskip-0.4cm \null

\end{thebibliography}

\bigskip

\textsc{Yago Antol\'{i}n, University of Neuch\^{a}tel, Mathematics Department
Rue Emile-Argand 11,
CH-2000 Neuch\^{a}tel
Switzerland}

\emph{E-mail address}{:\;\;}\url{yago.anpi@gmail.com}

\emph{URL}{:\;\;}\url{https://sites.google.com/site/yagoanpi/}

\medskip

\textsc{Jos\'{e} Burillo, Departament de Matem\`{a}tica Aplicada IV,
Escola Polit\`{e}cnica Superior de Castelldefels,
Universitat Polit\`{e}cnica de Catalunya,
C/Esteve Torrades 5, 08860 Castelldefels, Barcelona, Spain}

\emph{E-mail address}{:\;\;}\url{burillo@ma4.upc.edu}

\emph{URL}{:\;\;}\url{http://www-ma4.upc.edu/~burillo/}

\medskip

\textsc{Armando Martino, School of Mathematics,
University of  Southampton, University Road,
Southampton SO17 1BJ, UK}

\emph{E-mail address}{:\;\;}\url{A.Martino@soton.ac.uk}

\emph{URL}{:\;\;}\url{http://www.personal.soton.ac.uk/am1t07/}

\end{document}